%
%
%


\documentclass{mcom-l}

\usepackage{amssymb}




\newtheorem{theorem}{Theorem}[section]
\newtheorem{lemma}[theorem]{Lemma}

\theoremstyle{definition}

\newtheorem{example}[theorem]{Example}

\theoremstyle{remark}
\newtheorem{remark}[theorem]{Remark}

\numberwithin{equation}{section}

\begin{document}

\title[Analysis for stochastic time-space fractional diffusion equation]{Numerical analysis for stochastic time-space fractional diffusion equation driven by fractional Gaussion noise}


\author{Daxin Nie}
\address{School of Mathematics and Statistics, Gansu Key Laboratory of Applied Mathematics and Complex Systems, Lanzhou University, Lanzhou 730000, P.R. China}
\curraddr{}
\email{ndx1993@163.com}
\thanks{}

\author{Weihua Deng}
\address{School of Mathematics and Statistics, Gansu Key Laboratory of Applied Mathematics and Complex Systems, Lanzhou University, Lanzhou 730000, P.R. China}
\curraddr{}
\email{dengwh@lzu.edu.cn}
\thanks{This work was supported by the National Natural Science Foundation of China under
	Grant No. 12071195 and the AI and Big Data Funds under Grant No. 2019620005000775.}

\subjclass[2020]{Primary: 65M60 $\cdot$ 35R60 $\cdot$ 35R11}

\date{}

\dedicatory{}

\begin{abstract}
	In this paper, we consider the strong convergence of the time-space fractional diffusion equation driven by fractional Gaussion noise with Hurst index $H\in(\frac{1}{2},1)$. A sharp regularity estimate of the mild solution and the numerical scheme constructed by finite element method for integral fractional Laplacian and backward Euler convolution quadrature for Riemann-Liouville time fractional derivative are proposed. With the help of inverse Laplace transform and fractional Ritz projection, we obtain the accurate error estimates in time and space. Finally, our theoretical results are accompanied by numerical experiments.
\end{abstract}

\maketitle

\section{Introduction}

In the framework of uncoupled CTRW, if both the second moment of the jump length and the mean waiting time diverge, it describes competition between subdiffusion and L\'evy flights. The equivalent microscopic model is based on the subordinated Langevin equation with stable noise.  The probability density function of the position of the particle motion is governed by the fractional Fokker-Planck equation with temporal and spatial fractional derivatives \cite{Deng_2020}. If the system is influenced by external fluctuating source term, e.g., fractional Gaussian noise, it has the form (\ref{equretosol0}). Here we focus on its numerical analysis.

Let $\mathbb{D}\subset \mathbb{R}^{d}$ $(d=1,2,3)$ be a bounded domain with smooth boundary and $\psi(x,t)$ the solution of
\begin{equation}\label{equretosol0}
	\left \{
	\begin{aligned}
		&\partial_{t} \psi(x,t)+\!_0\partial^{1-\alpha}_t\mathcal{A}^{s} \psi(x,t)
		=\dot{W}^{H}_{Q}(x,t) \qquad\quad(x,t)\in \mathbb{D}\times(0,T],\\
		&\psi(x,0)=\psi_0(x) \qquad\qquad\qquad\qquad\qquad \ \qquad\qquad x\in\mathbb{D},\\
		&\psi(x,t)=0 \qquad\qquad\qquad\qquad\qquad\qquad\qquad\ \ \, (x,t)\in \mathbb{D}^{c}\times [0,T],
	\end{aligned}
	\right .
\end{equation}
where $\mathcal{A}^{s}=(-\Delta)^{s}$ with a zero Dirichlet boundary condition and defined by \cite{Deng2018}
\begin{equation*}
	\mathcal{A}^{s}\psi=c_{d,s}{\rm P.V.}\int_{\mathbb{R}^d}\frac{\psi(x)-\psi(y)}{|x-y|^{d+2s}}dy,\qquad s\in (0,1)
\end{equation*}
with $c_{d,s}=\frac{2^{2s}s\Gamma(d/2+s)}{\pi^{d/2}\Gamma(1-s)}$; $\mathbb{D}^{c}$ means the complement of $\mathbb{D}$; $T$ denotes a fixed terminal time; $\partial_{t}$ is the first-order derivative of $t$; $\ _0\partial^{1-\alpha}_t$ is the Riemann-Liouville fractional derivative, defined by \cite{Podlubny_1999}
\begin{equation}
	_{0}\partial^{1-\alpha}_t\psi=\frac{1}{\Gamma(\alpha)}\frac{\partial}{\partial t}\int^t_{0}(t-\xi)^{\alpha-1}\psi(\xi)d\xi,\quad \alpha\in(0,1);
\end{equation}
$\dot{W}^{H}_{Q}$ denotes fractional Gaussian noise; $W^{H}_{Q}$ is fractional Gaussian process with  Hurst index $H\in (\frac{1}{2},1)$ and covariance operator $Q$ on a complete filtered probability space $(\Omega,\mathcal{F},\mathbb{P},\{\mathcal{F}_{t}\}_{t\geq 0})$ and it can be written as
\begin{equation*}
	W^{H}_{Q}(x,t)=\sum_{k=1}^{\infty}\sqrt{\varLambda_{k}}\phi_k(x)W^{H}_k(t),
\end{equation*}
where $\{(\varLambda_{k},\phi_{k})\}_{k=1}^{\infty}$ are eigenvalues and orthonormal eigenfunctions of the self-adjoint, nonnegative linear operator $Q$ on $\mathbb{H}=L^{2}(\mathbb{D})$; $W^{H}_k$, $k=1,2,\ldots,$ are independent one-dimensional fractional Brownian motion (fBm) process with Hurst index $H$. In this paper, we assume that $A^{-\rho}Q^{\frac{1}{2}}$ is a Hilbert-Schmidt operator on $\mathbb
{H}$, where $\rho$ is a real number, $A$ denotes the classical Laplace operator $-\Delta$ with a zero Dirichlet boundary condition, and its domain $\mathcal{D}(A)=H^{1}_{0}(\mathbb{D})\cap H^{2}(\mathbb{D})$.

Obviously, the problem \eqref{equretosol0} can be divided into the following two problems, i.e., a deterministic problem
\begin{equation}\label{equretosol1}
	\left \{
	\begin{aligned}
		&\partial_{t} v+\!_0\partial^{1-\alpha}_t\mathcal{A}^{s}v=0
		\qquad\qquad\qquad~~~~\, (x,t)\in \mathbb{D}\times(0,T],\\
		&v(\cdot,0)=\psi_0 \qquad\qquad\qquad\qquad\ x\in\mathbb{D},\\
		&v=0 \qquad\qquad\qquad\qquad\qquad\ \ \ (x,t)\in \mathbb{D}^{c}\times[0,T],
	\end{aligned}
	\right .
\end{equation}
and a stochastic problem
\begin{equation}\label{equretosol}
	\left \{
	\begin{aligned}
		&\partial_{t} u+\!_0\partial^{1-\alpha}_t\mathcal{A}^{s}u
		=\dot{W}^{H}_{Q} \qquad\qquad~~ (x,t)\in \mathbb{D}\times(0,T],\\
		&u(\cdot,0)=0 \qquad\qquad\qquad\qquad\ x\in\mathbb{D},\\
		&u=0 \qquad\quad\qquad\qquad\qquad\ \ \ \ (x,t)\in \mathbb{D}^{c}\times[0,T].
	\end{aligned}
	\right .
\end{equation}
Extensive numerical schemes for the deterministic fractional diffusion equation \eqref{equretosol1} have been proposed in \cite{Acosta20173,Bonito2018,Nie2020}. Also, there have been many works for numerically solving stochastic partial differential equations (PDEs)  involving Laplace and spectral fractional Laplacian; one can refer to \cite{Bolin2020,kovacs_2014,kovacs_20142,li_2017,Li_2019,Liu_2020,nie20202,wang2017}. But for stochastic PDEs  involving integral fractional Laplacian, the related researches are still few. In this paper, we provide a numerical scheme for stochastic PDE \eqref{equretosol} based on backward Euler convolution for Riemann-Liouville fractional derivative and finite element method for integral fractional Laplacian.

Different from the Laplace and spectral fractional Laplacians, the eigenfunctions of $\mathcal{A}^{s}$ are unknown, so how to well characterize the influence of the noise on the regularity of the solution for \eqref{equretosol} is a challenge. Here we provide the sharp regularity of the mild solution of \eqref{equretosol} by building the resolvent estimate of integral fractional Laplacian and using the equivalence of the fractional Sobolev spaces, i.e., for $s\in[0,1]$, the spaces $\hat{H}^{s}(\mathbb{D})$ and $\dot{H}^{s}(\mathbb{D})$ are equivalent. Then we transform the Wiener integral with respect to fBm into the one of Brownian motion and use It\^{o}'s isometry to obtain error estimate for time semi-discrete scheme. Finally, we introduce the fractional Ritz projection to build the estimate $\|(\mathcal{A}^{s}_{h})^{-\frac{1}{2}}P_{h}A^{\frac{s}{2}}\|\leq C$ (see Section 5) to get accurate error estimates.

The paper is organized as follows. In Section 2, some preliminaries about fBm and fractional Sobolev spaces are introduced. In Section 3, we provide the spatial regularity estimate and H\"{o}lder regularity  estimate about the mild solution of Eq. \eqref{equretosol}. In Section 4, the Riemann-Liouville fractional derivative is approximated by backward Euler convolution quadrature method   and we provide the error estimates for the semidiscrete scheme. In Section 5,  finite element method is used to discretize integral fractional Laplacian and error estimates for the fully discrete scheme are provided. In Section 6, extensive numerical examples verify the theoretically predicted convergence order. We conclude the paper with some discussions in the last section.

Throughout this paper, $C$ denotes a generic positive constant, whose value may differ at each occurrence. The notation `$\tilde{~}$' means taking Laplace transform and let $\epsilon > 0$ be arbitrarily small.

\section{Preliminaries}
We provide some facts on fBm and fractional Sobolev spaces, which can refer to \cite{Mishura_2008,Kloeden_1995,Prato_2014}.

Introduce $\mathbb{H}_{0}=Q^{\frac{1}{2}}(\mathbb{H})$, whose inner product is $( \mu,\nu)_{\mathbb{H}_{0}}=( Q^{-\frac{1}{2}}\mu,Q^{-\frac{1}{2}}\nu)$ for $\mu,\nu\in\mathbb{H}_{0}$. Denote all the bounded linear operators from $\mathbb{H}$ to $\mathbb{H}$ and the ones from $\mathbb{H}_{0}$ to $\mathbb{H}$ by $\mathcal{L}(\mathbb{H})$ and $\mathcal{L}(\mathbb{H}_{0},\mathbb{H})$, respectively. The subspaces of $\mathcal{L}(\mathbb{H})$ and $\mathcal{L}(\mathbb{H}_{0},\mathbb{H})$ consisting of Hilbert-Schmidt operators are defined by $\mathcal{L}_{2}$ and $\mathcal{L}^{0}_{2}$ with norms, respectively, given by 
\begin{equation*}
	\begin{aligned}
		&\|\mathcal{S}\|^{2}_{\mathcal{L}_{2}}= \langle \mathcal{S},\mathcal{S} \rangle_{\mathcal{L}_{2}}=\sum_{j\in\mathbb{N}}( \mathcal{S}\eta_{j},\mathcal{S}\eta_{j})_{\mathbb{H}},\quad \mathcal{S}\in \mathcal{L}_{2},\\
		&\|\mathcal{T}\|^{2}_{\mathcal{L}^{0}_{2}}= \langle \mathcal{T},\mathcal{T} \rangle_{\mathcal{L}_{2}}=\sum_{j\in\mathbb{N}}( \mathcal{T}\bar{\eta}_{j},\mathcal{T}\bar{\eta}_{j})_{\mathbb{H}},\quad \mathcal{T}\in \mathcal{L}^{0}_{2},\\
	\end{aligned}
\end{equation*}
which are independent of the specific choice of orthonormal basis $\{\eta_{j}\}_{j\in \mathbb{N}}$ in $\mathbb{H}$ and $\{\bar{\eta}_{j}\}_{j\in \mathbb{N}}$ in $\mathbb{H}_{0}$.
We denote $W^{H}_{Q}(x,t)$ as  $W^{H}_{Q}(t)$ and $\mathbb{E}$ as expectation operator in the following. Define $\mathbb{H}=L^{2}(\mathbb{D})$ with inner product $(\cdot,\cdot)$ and abbreviate $\|\cdot\|_{\mathcal{L}(\mathbb{H})}$ as $\|\cdot\|$.

For any $ q\geq -1 $, denote the space $ \hat{H}^q(\mathbb{D})=\mathcal{D}(A^{q/2})$ \cite{Thomee_2006} with the norm given by
\begin{equation*}
	|\mu|^2_{\hat{H}^q(\mathbb{D})}=\|A^{q/2}\mu\|_{L^2(\mathbb{D})}=\left(\sum_{j=1}^{\infty}\varkappa_{j}^{q}(\mu,\varphi_{j})^{2}\right )^{\frac{1}{2}},
\end{equation*}
where  $ \{(\varkappa_{j},\varphi_j)\}_{j=1}^{\infty} $ are $A$'s eigenvalues ordered non-decreasingly and the corresponding eigenfunctions normalized in the $\mathbb{H} $ norm. The eigenvalues of Laplace operator satisfy the following estimates.

\begin{lemma}[\cite{laptev_1997,Peter1983}]\label{thmeigenvalue}
	Let $\mathbb{D}$ be a bounded domain in $\mathbb{R}^d \,(d=1,2,3)$, with volume $|\mathbb{D}|$. Denote $\varkappa_j$ as the $j$-th eigenvalue of the Dirichlet boundary problem for the Laplace operator $-\Delta$ in $\mathbb{D}$. There is, for all $j\geq 1$,
	\begin{equation*}
		\varkappa_{j} \geq \frac{C_{d} d}{d+2} j^{\frac{2}{d}}|\mathbb{D}|^{-\frac{2}{d}},
	\end{equation*}
	where $C_{d}=(2 \pi)^{2} B_{d}^{-\frac{2}{d}}$ and $B_d$ denotes the volume of the unit $d$-dimensional ball.
\end{lemma}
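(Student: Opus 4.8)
The plan is to deduce the pointwise lower bound for each individual eigenvalue $\varkappa_j$ from the classical lower bound on the *sum* of the first $n$ eigenvalues — the Li–Yau / Berezin type inequality — which is the standard route to results of this shape. First I would recall the Li–Yau inequality: for the Dirichlet Laplacian on a bounded domain $\mathbb{D}\subset\mathbb{R}^d$ with volume $|\mathbb{D}|$, one has for every $n\geq 1$
\begin{equation*}
	\sum_{j=1}^{n}\varkappa_{j}\geq \frac{d}{d+2}\,C_{d}\,|\mathbb{D}|^{-\frac{2}{d}}\,n^{1+\frac{2}{d}},
\end{equation*}
with $C_{d}=(2\pi)^{2}B_{d}^{-2/d}$ and $B_d$ the volume of the unit ball in $\mathbb{R}^d$. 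This is exactly the form in which the cited references \cite{laptev_1997,Peter1983} phrase the result; its proof uses the Fourier transform of the (extended by zero) Dirichlet eigenfunctions together with the Hardy–Littlewood rearrangement inequality to bound the spectral counting function, but I would simply invoke it rather than reprove it.

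The second and only substantive step is to convert the sum bound into a per-eigenvalue bound. Since the eigenvalues are ordered non-decreasingly, $\varkappa_j\leq \varkappa_n$ for all $j\leq n$, hence
\begin{equation*}
	n\,\varkappa_{n}\geq \sum_{j=1}^{n}\varkappa_{j}\geq \frac{d}{d+2}\,C_{d}\,|\mathbb{D}|^{-\frac{2}{d}}\,n^{1+\frac{2}{d}},
\end{equation*}
and dividing by $n$ yields
\begin{equation*}
	\varkappa_{n}\geq \frac{C_{d}\,d}{d+2}\,n^{\frac{2}{d}}\,|\mathbb{D}|^{-\frac{2}{d}},
\end{equation*}
which is precisely the claimed inequality after renaming $n$ as $j$. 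Note this monotonicity trick is lossless in the power of $j$ but does discard a constant factor (roughly $(1+2/d)^{-1}$ relative to what a sharper interpolation would give); since the statement only asks for this particular constant, no further care is needed.

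The main obstacle — really the only place where anything could go wrong — is bookkeeping the dimensional constant $C_d$ and the factor $\frac{d}{d+2}$ so that they match the normalization used in \cite{laptev_1997,Peter1983}; different sources absorb the $(2\pi)^2$ and the ball volume $B_d$ into the Weyl constant in slightly different ways, so I would double-check that $C_d=(2\pi)^2 B_d^{-2/d}$ is consistent with the convention in which the Weyl asymptotics reads $\varkappa_j\sim C_d\,(j/|\mathbb{D}|)^{2/d}$. Once that is pinned down, the lemma follows immediately from the Li–Yau bound and the ordering of the eigenvalues, with no further estimates required.
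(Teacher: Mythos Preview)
Your argument is correct and is precisely the standard derivation: the Li--Yau sum inequality from \cite{Peter1983} (with the sharp constant as in \cite{laptev_1997}) combined with the monotonicity of the ordered eigenvalues immediately yields the pointwise lower bound with the stated constant. The paper itself does not supply a proof of this lemma; it is quoted directly from the cited references, so there is nothing further to compare against.
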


Then we recall some fractional Sobolev spaces \cite{Acosta20172,Acosta20173,Acosta20171,Adams2013,borthagaray2018,Di2012,Lions1974}. For a given open set $\mathbb{D}\subset \mathbb{R}^d$ $(d=1,2,3)$, the fractional Sobolev spaces $ H^s(\mathbb{D}) $ with $s>0$  are defined by
\begin{equation*}
	H^s(\mathbb{D})=\left\{w\in H^{\lfloor s\rfloor}(\mathbb{D}):|w|_{H^s(\mathbb{D})}^{2}=\int\!\int_{\mathbb{D}^2}\frac{|D^{\lfloor s\rfloor}w(x)-D^{\lfloor s\rfloor}w(y)|^2}{|x-y|^{d+2(s-\lfloor s\rfloor)}}dxdy<\infty\right\}
\end{equation*}
with the norm
\begin{equation*}
	\|w\|_{H^{s}(\mathbb{D})}=\left(\|w\|^{2}_{H^{\lfloor s\rfloor}(\mathbb{D})}+|w|^{2}_{H^s(\mathbb{D})}\right)^{\frac{1}{2}},
\end{equation*}
where $\lfloor s\rfloor$ means the biggest integer not larger than $s$ and $D^{\lfloor s\rfloor}$ is $\lfloor s\rfloor$-th order derivative.
Introduce the subspace of $H^s(\mathbb{R}^d)$, consisting of the functions supported in $\bar{\mathbb{D}}$ and $s\in(0,1)$ by \cite{Acosta20173,borthagaray2018}
\begin{equation*}
	\dot{H}^s(\mathbb{D})=\{w\in H^s(\mathbb{R}^d):{\bf supp} \ w\subset \bar{\mathbb{D}}\},
\end{equation*}
which can also be defined by interpolation when $\mathbb{D}$ is a Lipschitz domain \cite{Acosta20173,borthagaray2018}, i.e.,
\begin{equation*}
	\dot{H}^s(\mathbb{D})=[L^{2}(\mathbb{D}),H^{1}_{0}(\mathbb{D})]_{s}.
\end{equation*}
The dual space of $\dot{H}^{s}(\mathbb{D})$ is denoted as $H^{-s}(\mathbb{D})$.

\begin{remark}\label{Remk1}
	According to \cite{Acosta20173}, for $s\in(0,1)$,
	the norm of $\dot{H}^{s}(\mathbb{D})$ induced by inner product, i.e.,
	\begin{equation}\label{equdefHsOmega}
		\langle u,w\rangle_s:=c_{d,s}\int\int_{(\mathbb{R}^d\times\mathbb{R}^d)\backslash(\mathbb{D}^c\times\mathbb{D}^c)}\frac{(u(x)-u(y))(w(x)-w(y))}{|x-y|^{d+2s}}dydx,
	\end{equation}
	is a multiple of the $H^{s}(\mathbb{R}^d)$-seminorm. We can get that the $H^{s}(\mathbb{R}^d)$-seminorm is equivalent to the full $H^{s}(\mathbb{R}^d)$-norm on this space by using the fractional Poincar\'{e}-type inequality \cite{Di2012}.
\end{remark}

\begin{remark}\label{Remk2}
	It is well-known that $ \hat{H}^0(\mathbb{D})=L^2(\mathbb{D}) $, $\hat{H}^1(\mathbb{D})=H^1_0(\mathbb{D})$, and $\hat{H}^2(\mathbb{D})=H^2(\mathbb{D})\bigcap H^1_0(\mathbb{D})$. According to \cite{Thomee_2006}, when $\mathbb{D}$ is a Lipschitz domain, we have $\hat{H}^{-s}(\mathbb{D})=H^{-s}(\mathbb{D})$ with $s\in(0,1)$ and $\hat{H}^{s}(\mathbb{D})=\dot{H}^{s}(\mathbb{D})$ with $s\in[0,1]$. Moreover, $\hat{H}^{s}(\mathbb{D})=\dot{H}^{s}(\mathbb{D})=H^{s}(\mathbb{D})$ for $s\in(0,\frac{1}{2})$.
\end{remark}

Next, we recall the elliptic regularity of fractional Laplacian $\mathcal{A}^{s}$ in \cite{Acosta20173,Grubb2015}.

\begin{theorem}[\cite{Grubb2015}]\label{thmregdiri}
	Let $u\in \hat{H}^s(\mathbb{D})$ be the solution of the Dirichlet problem
	\begin{equation}\label{equdiripro}
		\left \{\begin{aligned}
			(-\Delta)^su&=g\quad {\rm in}\ \mathbb{D},\\
			u&=0\quad {\rm in}\ \mathbb{D}^c,
		\end{aligned}\right.
	\end{equation}
	where $\mathbb{D} \subset \mathbb{R}^d$ is a bounded domain with smooth boundary and $g\in H^{\sigma}(\mathbb{D})$ for some $\sigma\geq -s$ and $s\in (0,1)$.
	Then, there exists a constant $C$ such that
	\begin{equation*}
		|u|_{H^{s+\gamma}(\mathbb{R}^d)}\leq C\|g\|_{H^{\sigma}(\mathbb{D})},
	\end{equation*}
	where $\gamma=\min(s+\sigma,\frac{1}{2}-\epsilon)$ with $\epsilon>0$ arbitrarily small.
\end{theorem}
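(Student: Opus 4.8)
The plan is to localize the problem with a smooth partition of unity subordinate to a finite cover of $\bar{\mathbb{D}}$ by balls that are either contained in $\mathbb{D}$ or centred at boundary points, treat interior and boundary pieces separately, and then recombine the local bounds while absorbing a lower-order term. On the interior pieces $(-\Delta)^s$ behaves like a classical elliptic pseudodifferential operator of order $2s$ with smooth symbol $|\xi|^{2s}$, so a standard Fourier-multiplier / difference-quotient bootstrap applied to the weak formulation $\langle u,w\rangle_s=\int_{\mathbb{D}}gw$ gains $2s$ derivatives: for $\chi,\tilde\chi\in C_c^\infty(\mathbb{D})$ with $\tilde\chi\equiv1$ near $\operatorname{supp}\chi$ one obtains $\|\chi u\|_{H^{\sigma+2s}(\mathbb{R}^d)}\le C\big(\|g\|_{H^{\sigma}(\mathbb{D})}+\|u\|_{\hat H^{s}(\mathbb{D})}\big)$, using that $[(-\Delta)^s,\chi]$ acts with order $2s-1$ on the compactly supported functions involved and that the pieces with disjoint supports contribute smoothing terms. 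Since $\sigma+2s\ge s+(s+\sigma)\ge s+\gamma$, the interior yields the required regularity with room to spare and no endpoint restriction appears there.

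The serious work is at the boundary, and this is where I expect the main obstacle. After flattening $\partial\mathbb{D}$ locally by a smooth diffeomorphism — which perturbs $(-\Delta)^s$ only by lower-order pseudodifferential terms and preserves the $H^s$ scale up to equivalent norms — the model problem becomes $(-\Delta)^su=g$ on the half-space $\mathbb{R}^d_+$ with $u$ supported in $\overline{\mathbb{R}^d_+}$. The structural input is that the symbol $|\xi|^{2s}$ satisfies Hörmander's $s$-transmission condition at $\{x_d=0\}$. I would then invoke the order-reducing operators $\Lambda_\pm^{(t)}$ — properly supported pseudodifferential operators with symbol $(\langle\xi'\rangle\pm i\xi_d)^t$ whose ``$+$'' (respectively ``$-$'') version preserves support in $\overline{\mathbb{R}^d_+}$ (respectively $\overline{\mathbb{R}^d_-}$) — to recast the equation as an elliptic problem in the transmission calculus and bootstrap: from $g$ in the restriction space $\overline{H}^{\sigma}(\mathbb{R}^d_+)$ (which corresponds to $H^{\sigma}(\mathbb{D})$ in the flattened coordinates) one concludes that $u$ lies in the $s$-transmission space $H^{s(\sigma+2s)}(\mathbb{R}^d_+)$ of order $\sigma+2s$, i.e. $u=\Lambda_+^{(-s)}(e^+v)$ up to a more regular remainder, with $v\in\overline{H}^{\sigma+s}(\mathbb{R}^d_+)$. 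The decisive step is the continuous embedding $H^{s(\sigma+2s)}(\mathbb{R}^d_+)\hookrightarrow\overline{H}^{s+\gamma}(\mathbb{R}^d_+)$ with $\gamma=\min(\sigma+s,\frac{1}{2}-\epsilon)$, after which $e^+u\in H^{s+\gamma}(\mathbb{R}^d)$ with the matching seminorm bound. Patching the finitely many boundary bounds with the interior one and then using $\|u\|_{\hat H^{s}(\mathbb{D})}\le C\|g\|_{H^{-s}(\mathbb{D})}\le C\|g\|_{H^{\sigma}(\mathbb{D})}$ — the a priori bound from the Lax–Milgram theorem applied to the form $\langle\cdot,\cdot\rangle_s$, which is coercive by Remark~\ref{Remk1}, together with $\sigma\ge-s$ — to absorb the lower-order term gives the claim.

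Why is the gain capped at $\frac{1}{2}-\epsilon$, i.e. where does the $\min$ come from? The prototype is the boundary profile $d(x)^s$ of the solution for smooth data — a constant multiple of $(1-|x|^2)_+^s$ solves $(-\Delta)^su=1$ in the unit ball, so near $\partial\mathbb{D}$ one has $u\sim d(x)^s$ times a smooth factor — and in normal coordinates ($d=x_d$) the Gagliardo seminorm of $x_d^s$ reduces, after integrating out the tangential directions, to $\int_0^\infty\!\!\int_0^\infty|x_d^s-y_d^s|^2|x_d-y_d|^{-1-2(s+t)}\,dx_d\,dy_d$, which converges precisely for $t<\frac{1}{2}$ and diverges logarithmically at $t=\frac{1}{2}$. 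Hence, no matter how regular $g$ is, $u$ cannot beat $H^{s+1/2-\epsilon}(\mathbb{R}^d)$; while when $\sigma+s<\frac{1}{2}$ the binding constraint is the expected elliptic count $\gamma=\sigma+s$, which already comes out of the mapping property of $\Lambda_+^{(-s)}$ on $\overline{H}^{\sigma+s}$ with no transmission subtlety. An alternative route — more self-contained for $s\le\frac{1}{2}$ and $g\in L^2$, but not circumventing the boundary obstruction — is to run interior weighted elliptic estimates for the Caffarelli–Silvestre extension and combine them with the boundary regularity $u/d^s\in C^{\alpha}(\bar{\mathbb{D}})$, which again forces exactly the $H^{s+1/2-\epsilon}$ ceiling.
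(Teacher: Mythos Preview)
The paper does not prove this statement; Theorem~\ref{thmregdiri} is quoted from \cite{Grubb2015} (cf.\ also \cite{Acosta20173}) and used as a black box throughout. Your sketch is essentially the argument carried out in that reference: localization, interior pseudodifferential bootstrap, and --- near the boundary after flattening --- the $\mu$-transmission calculus with the order-reducing operators $\Lambda_\pm^{(t)}$, landing in the $s$-transmission spaces whose embedding into $\overline{H}^{s+\gamma}$ with $\gamma=\min(\sigma+s,\tfrac{1}{2}-\epsilon)$ delivers the ceiling; your explanation of the $\tfrac{1}{2}-\epsilon$ barrier via the boundary profile $d(x)^s$ is the standard heuristic. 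There is nothing to compare against in the present paper --- your outline matches the cited source, while the paper simply invokes the result.
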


As for one-dimensional fBm process, we have the fact \cite{Liu_2020,Mishura_2008}
\begin{equation}\label{eqrepfbm}
	\int_{0}^{t}f(s)dW^{H}(s)=C_{H}\left (H-\frac{1}{2}\right )\int_{0}^{t}\int_{s}^{t}f(r)(r-s)^{H-\frac{3}{2}}\left(\frac{s}{r}\right)^{\frac{1}{2}-H}drdW(s), ~~ t\in[0,T],
\end{equation}
where $C_{H}=\left(\frac{2H\Gamma(\frac{3}{2}-H)}{\Gamma(H+\frac{1}{2})\Gamma(2-2H)}\right )^{\frac{1}{2}}$ and $W(s)=W^{\frac{1}{2}}(s)$ is Brownian process.

Lastly, we define two sectors $\Sigma_{\theta}$ and $\Sigma_{\theta,\kappa}$ with $\kappa>0$ and $\pi/2<\theta<\pi$ as
\begin{equation*}
	\begin{aligned}
		&\Sigma_{\theta}=\{z\in\mathbb{C}:z\neq 0,|\arg z|\leq \theta\},\ \Sigma_{\theta,\kappa}=\{z\in\mathbb{C}:|z|>\kappa,|\arg z|\leq \theta\},\\
	\end{aligned}
\end{equation*}
and the contour $\Gamma_{\theta,\kappa}$ is given by
\begin{equation*}
	\Gamma_{\theta,\kappa}=\{\varrho e^{\pm\mathbf{i}\theta}: \varrho\geq \kappa\}\bigcup\{\kappa e^{\mathbf{i}\phi}: |\phi|\leq \theta\},\quad \pi/2<\theta<\pi,~~\kappa>0,
\end{equation*}
where the circular arc is oriented counterclockwise and the two rays are oriented with an increasing imaginary part and $\mathbf{i}^2=-1$.

\section{A priori estimate of the solution for \eqref{equretosol}}

With the help of Laplace transform and inverse Laplace transform, we write the mild solution of Eq. \eqref{equretosol} as
\begin{equation}\label{eqrepsol}
	u=\int_{0}^{t}\mathcal{R}(t-s)dW^{H}_{Q}(s),
\end{equation}
where the operator $\mathcal{R}(t)$ is defined by
\begin{equation}\label{eqdefE}
	\mathcal{R}(t)=\frac{1}{2\pi\mathbf{i}}\int_{\Gamma_{\theta,\kappa}}e^{zt}z^{\alpha-1}(z^{\alpha}+\mathcal{A}^{s})^{-1}dz.
\end{equation}
\begin{lemma}\label{lemresolest}
	Let $\mathcal{A}^{s}$ be the fractional Laplacian with a zero Dirichlet boundary condition and $s\in(0,1)$. Assume $\alpha\in(0,1)$ and $z\in \Sigma_{\theta,\kappa}$ with $\pi/2< \theta< \pi$. Then it follows that
	\begin{equation*}
		\|(z^{\alpha}+\mathcal{A}^{s})^{-1}\|_{\hat{H}^{\sigma}(\mathbb{D})\rightarrow \hat{H}^{\sigma+2\mu s}(\mathbb{D})}\leq C|z|^{(\mu-1)\alpha},
	\end{equation*}
	where $\mu\in[0,\min(1,\frac{1-\sigma}{2s})]$ and $\sigma\in [-s,\frac{1}{2}-s)$. When $\sigma\in[\frac{1}{2}-s,0]$ with $s\in[\frac{1}{2},1)$, we have
	\begin{equation*}
		\|(z^{\alpha}+\mathcal{A}^{s})^{-1}\|_{\hat{H}^{\sigma}(\mathbb{D})\rightarrow L^{2}(\mathbb{D})}\leq C|z|^{(-\frac{\sigma}{2s}-1)\alpha}.
	\end{equation*}
\end{lemma}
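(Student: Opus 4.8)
The plan is to estimate the resolvent $(z^{\alpha}+\mathcal{A}^{s})^{-1}$ by combining the sectoriality of $\mathcal{A}^{s}$ on $L^{2}(\mathbb{D})$ with the elliptic regularity estimate of Theorem~\ref{thmregdiri} and the norm equivalences recalled in Remark~\ref{Remk2}. First I would recall that $\mathcal{A}^{s}$ is a nonnegative self-adjoint operator on $\mathbb{H}=L^{2}(\mathbb{D})$, hence for $z\in\Sigma_{\theta,\kappa}$ one has $z^{\alpha}\in\Sigma_{\alpha\theta,\kappa^{\alpha}}$ with $\alpha\theta$ possibly exceeding $\pi/2$ but still $\operatorname{Re}(z^{\alpha}+\lambda)\gtrsim |z|^{\alpha}+\lambda$ uniformly for $\lambda\geq 0$ in a sector, so by the spectral theorem
\begin{equation*}
	\|(z^{\alpha}+\mathcal{A}^{s})^{-1}\|_{L^{2}(\mathbb{D})\rightarrow L^{2}(\mathbb{D})}\leq C|z|^{-\alpha},\qquad
	\|\mathcal{A}^{s}(z^{\alpha}+\mathcal{A}^{s})^{-1}\|_{L^{2}(\mathbb{D})\rightarrow L^{2}(\mathbb{D})}\leq C.
\end{equation*}
These are the $\mu=0$ and (a precursor to) the $\mu=1$ endpoints; the intermediate values of $\mu$ will come from interpolation once the two endpoints are expressed in the scale $\hat{H}^{\sigma}$.

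Next I would upgrade the spatial smoothness. Writing $w=(z^{\alpha}+\mathcal{A}^{s})^{-1}g$, so that $\mathcal{A}^{s}w=g-z^{\alpha}w$, I apply Theorem~\ref{thmregdiri} with right-hand side $g-z^{\alpha}w\in H^{\sigma}(\mathbb{D})$: this gives $|w|_{H^{s+\gamma}(\mathbb{R}^{d})}\leq C\|g-z^{\alpha}w\|_{H^{\sigma}(\mathbb{D})}$ with $\gamma=\min(s+\sigma,\tfrac12-\epsilon)$. Using the $L^{2}$-bound on $w$ to control $\|z^{\alpha}w\|$ and then converting the $H^{s+\gamma}(\mathbb{R}^{d})$-seminorm into the $\hat{H}^{s+\gamma}(\mathbb{D})=\dot{H}^{s+\gamma}(\mathbb{D})$-norm via Remark~\ref{Remk1} and Remark~\ref{Remk2} (valid because $0\leq s+\gamma\leq 1$ under the stated restriction $\mu\leq\min(1,\frac{1-\sigma}{2s})$, which is exactly what keeps the target index $\sigma+2\mu s$ inside $[\,{-s},1]$), I obtain the endpoint bound $\|(z^{\alpha}+\mathcal{A}^{s})^{-1}\|_{\hat{H}^{\sigma}\rightarrow\hat{H}^{\sigma+2s}}\leq C|z|^{0}$ when $s+\sigma\leq\tfrac12-\epsilon$, i.e. the $\mu=1$ case, together with the trivial $\mu=0$ case $\|(z^{\alpha}+\mathcal{A}^{s})^{-1}\|_{\hat{H}^{\sigma}\rightarrow\hat{H}^{\sigma}}\leq C|z|^{-\alpha}$. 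Interpolating between these two — again legitimate since $\hat{H}^{\sigma+2\mu s}=[\hat{H}^{\sigma},\hat{H}^{\sigma+2s}]_{\mu}$ on this range — yields the first displayed inequality with the exponent $(\mu-1)\alpha$.

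For the second inequality, when $\sigma\in[\tfrac12-s,0]$ with $s\in[\tfrac12,1)$, the elliptic regularity of Theorem~\ref{thmregdiri} only buys $\gamma=\tfrac12-\epsilon$ worth of smoothness, which is not enough to reach $\hat{H}^{\sigma+2s}$; so instead I would go through $L^{2}(\mathbb{D})$ directly. One endpoint is again $\|(z^{\alpha}+\mathcal{A}^{s})^{-1}\|_{L^{2}\rightarrow L^{2}}\leq C|z|^{-\alpha}$ (the $\sigma=0$ case); the other is obtained by duality and the regularity estimate at the borderline, $\|(z^{\alpha}+\mathcal{A}^{s})^{-1}\|_{\hat{H}^{-s}\rightarrow L^{2}}\leq C|z|^{-\alpha/2}$ — formally the $\mu=\tfrac12$, $\sigma=-s$ instance of the first bound, which is still within the regime where Theorem~\ref{thmregdiri} applies with $\sigma=-s$ since then $s+\sigma=0\leq\tfrac12-\epsilon$. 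Interpolating the map $\hat{H}^{-s}\rightarrow L^{2}$ against $L^{2}\rightarrow L^{2}$ at the parameter $\vartheta=-\sigma/s\in[0,1]$ gives $\|(z^{\alpha}+\mathcal{A}^{s})^{-1}\|_{\hat{H}^{\sigma}\rightarrow L^{2}}\leq C|z|^{-\alpha(1-\vartheta)-(\alpha/2)\vartheta}=C|z|^{-\alpha(1+\sigma/(2s))}=C|z|^{(-\frac{\sigma}{2s}-1)\alpha}$, as claimed.

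The main obstacle is the bookkeeping at the two ends of the Sobolev scale: one must verify that the target exponent $\sigma+2\mu s$ never leaves the range $[-s,1]$ on which the identifications $\hat{H}^{q}=\dot{H}^{q}$ (and the passage between the $H^{q}(\mathbb{R}^{d})$-seminorm and the $\dot H^q(\mathbb D)$-norm) are valid, and that Theorem~\ref{thmregdiri}'s cutoff $\gamma=\min(s+\sigma,\tfrac12-\epsilon)$ is compatible with the chosen $\mu$ — this is precisely why the hypothesis splits into the two regimes $\sigma<\tfrac12-s$ and $\sigma\geq\tfrac12-s$, and why the $\epsilon$-loss is harmless here (it only affects which endpoint we may use, not the final power of $|z|$, since in the first bound the power $(\mu-1)\alpha$ is read off from the interpolation parameter, not from $\gamma$). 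The sectoriality estimates and the complex interpolation of the Bochner-type spaces $\hat H^q$ are standard and I would cite \cite{Thomee_2006}; the only genuinely problem-specific input is feeding $g-z^{\alpha}w$ into Theorem~\ref{thmregdiri} and tracking the $|z|^{\alpha}$ factors.
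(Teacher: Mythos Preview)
Your overall strategy---establish endpoint resolvent bounds and interpolate, using Theorem~\ref{thmregdiri} for the smoothing endpoint---matches the paper's, and your interpolation for the second displayed inequality (between $\hat H^{-s}\to L^{2}$ with bound $|z|^{-\alpha/2}$ and $L^{2}\to L^{2}$ with bound $|z|^{-\alpha}$, at parameter $-\sigma/s$) is exactly what the paper does.

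There is, however, a real gap at what you call the ``trivial $\mu=0$ case'' $\|(z^{\alpha}+\mathcal{A}^{s})^{-1}\|_{\hat{H}^{\sigma}\to\hat{H}^{\sigma}}\le C|z|^{-\alpha}$. The spectral theorem on $L^{2}(\mathbb{D})$ gives this only for $\sigma=0$; it does \emph{not} transfer automatically to $\hat{H}^{\sigma}$ for $\sigma<0$, because $\hat{H}^{\sigma}=\mathcal{D}(A^{\sigma/2})$ is built from the spectral calculus of the Dirichlet Laplacian $A$, not of $\mathcal{A}^{s}$, and these operators do not commute. Since the range $\sigma\in[-s,\tfrac12-s)$ is largely (and for $s\ge\tfrac12$ entirely) negative, this case is the heart of the lemma. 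Without it your bootstrap via Theorem~\ref{thmregdiri} also stalls: to feed $g-z^{\alpha}w$ into the elliptic estimate with right-hand side in $H^{\sigma}(\mathbb{D})$ you need $\|z^{\alpha}w\|_{H^{\sigma}}\lesssim\|g\|_{\hat{H}^{\sigma}}$, and for $g\in\hat{H}^{\sigma}\setminus L^{2}$ this is precisely the missing bound.

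The paper fills this gap by a variational (form-level) argument rather than $L^{2}$-sectoriality alone: testing $(z^{\alpha}+\mathcal{A}^{s})u=f$ against $u$ and using $a|z|+b\le C|az+b|$ in the sector yields
\[
|z|^{\alpha}\|u\|_{L^{2}}^{2}+\|u\|_{\hat H^{s}}^{2}\le C\|f\|_{\hat H^{-s}}\|u\|_{\hat H^{s}},
\]
hence directly the endpoint bounds $\hat H^{-s}\to\hat H^{s}$ (constant $C$) and $\hat H^{-s}\to L^{2}$ (bound $C|z|^{-\alpha/2}$) for the roughest admissible data. From there the paper combines the mapping property $\mathcal{A}^{s}:H^{l}(\mathbb{R}^{d})\to H^{l-2s}(\mathbb{R}^{d})$ with the identity $\mathcal{A}^{s}(z^{\alpha}+\mathcal{A}^{s})^{-1}=I-z^{\alpha}(z^{\alpha}+\mathcal{A}^{s})^{-1}$ to deduce $\|(z^{\alpha}+\mathcal{A}^{s})^{-1}\|_{\hat H^{-s}\to\hat H^{-s}}\le C|z|^{-\alpha}$, and then interpolates against a cited $\hat H^{\varsigma}\to\hat H^{\varsigma}$ bound at $\varsigma=\max(0,\tfrac12-s)$ to cover all $\sigma\in[-s,\tfrac12-s)$. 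Only after this is in hand does Theorem~\ref{thmregdiri} enter to upgrade to $\hat H^{\sigma+2s}$. In short: your outline is correct, but the $\hat H^{\sigma}\to\hat H^{\sigma}$ resolvent bound for $\sigma<0$ is not free, and you need the energy argument the paper uses (or an equivalent duality step) to obtain it before the regularity bootstrap can start.
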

\begin{proof}
	Let $(z^{\alpha}+\mathcal{A}^{s})u=f$ with $z\in \Sigma_{\theta,\kappa}$, $u=0$ in $\mathbb{D}^{c}$, and $f\in \hat{H}^{-s}(\mathbb{D})$. Then there holds
	\begin{equation*}
		\begin{aligned}
			|z^{\alpha}(u,u)+\langle u,u\rangle_{s}|=|(f,u)|\leq \|f\|_{\hat{H}^{-s}(\mathbb{D})}\|u\|_{\hat{H}^{s}(\mathbb{D})}.
		\end{aligned}
	\end{equation*}
	Using the fact $a|z|+b\leq C|az+b|$ with $a,b\geq 0$ and $z\in \Sigma_{\theta}$ \cite{FUJITA1991}, one has
	\begin{equation*}
		\begin{aligned}
			&\left |z^{\alpha}(u,u)+\langle u,u\rangle_{s}\right |\\
			=&\left |z^{\alpha}\|u\|^{2}_{L^{2}(\mathbb{D})}+\|u\|^{2}_{\hat{H}^{s}(\mathbb{D})}\right |\\
			\geq&C|z|^{\alpha}\|u\|^{2}_{L^{2}(\mathbb{D})}+C\|u\|^{2}_{\hat{H}^{s}(\mathbb{D})},
		\end{aligned}
	\end{equation*}
	which leads to
	\begin{equation*}
		\|u\|_{\hat{H}^{s}(\mathbb{D})}\leq C\|f\|_{\hat{H}^{-s}(\mathbb{D})},\quad |z|^{\alpha/2}\|u\|_{L^{2}(\mathbb{D})}\leq C\|f\|_{\hat{H}^{-s}(\mathbb{D})}.
	\end{equation*}
	Thus
	\begin{equation*}
		\|(z^{\alpha}+\mathcal{A}^{s})^{-1}\|_{\hat{H}^{-s}(\mathbb{D})\rightarrow \hat{H}^{s}(\mathbb{D})}\leq C,\quad \|(z^{\alpha}+\mathcal{A}^{s})^{-1}\|_{\hat{H}^{-s}(\mathbb{D})\rightarrow L^{2}(\mathbb{D})}\leq C|z|^{-\alpha/2}.
	\end{equation*}
	Due to the operator $\mathcal{A}^{s}:~H^{l}(\mathbb{R}^{d})\rightarrow H^{l-2s}(\mathbb{R}^{d})$ is a bounded and invertible operator \cite{Acosta20173} and $H^{-s}(\mathbb{R}^{d})\subset \hat{H}^{-s}(\mathbb{D})$, there holds
	\begin{equation*}
		\|\mathcal{A}^{s}(z^{\alpha}+\mathcal{A}^{s})^{-1}\|_{\hat{H}^{-s}(\mathbb{D})\rightarrow \hat{H}^{-s}(\mathbb{D})}\leq C.
	\end{equation*}
	Combining the fact $\mathcal{A}^{s}(z^{\alpha}+\mathcal{A}^{s})^{-1}=I-z^{\alpha}(z^{\alpha}+\mathcal{A}^{s})^{-1}$ with $I$ being an identity operator, we see
	\begin{equation}
		\|(z^{\alpha}+\mathcal{A}^{s})^{-1}\|_{\hat{H}^{-s}(\mathbb{D})\rightarrow \hat{H}^{-s}(\mathbb{D})}\leq C|z|^{-\alpha}.
	\end{equation}
	By interpolation property and $\|(z^{\alpha}+\mathcal{A}^{s})^{-1}\|_{\hat{H}^{\varsigma}(\mathbb{D})\rightarrow\hat{H}^{\varsigma}(\mathbb{D})}\leq C|z|^{-\alpha}$ for $\varsigma=\max(0,\frac{1}{2}-s)$ (see \cite{Acosta20173,Nie2020}), we get
	\begin{equation*}
		\|(z^{\alpha}+\mathcal{A}^{s})^{-1}\|_{\hat{H}^{\sigma}(\mathbb{D})\rightarrow \hat{H}^{\sigma}(\mathbb{D})}\leq C|z|^{-\alpha},\quad \|\mathcal{A}^{s}(z^{\alpha}+\mathcal{A}^{s})^{-1}\|_{\hat{H}^{\sigma}(\mathbb{D})\rightarrow \hat{H}^{\sigma}(\mathbb{D})}\leq C
	\end{equation*}
	for $\sigma\in[-s,\max(0,\frac{1}{2}-s))$ and $s\in(0,1)$; and
	\begin{equation*}
		\|(z^{\alpha}+\mathcal{A}^{s})^{-1}\|_{\hat{H}^{\sigma}(\mathbb{D})\rightarrow L^{2}(\mathbb{D})}\leq C|z|^{(-\frac{\sigma}{2s}-1)\alpha}\quad {\rm for}~s\in[\frac{1}{2},1)~{\rm and}~\sigma\in[\frac{1}{2}-s,0].
	\end{equation*}
	
	Combining Theorem \ref{thmregdiri}, the definition of $\dot{H}^s(\mathbb{D})$, and Remarks \ref{Remk1} and \ref{Remk2}, we have, for $s\in(0,1)$ and $\sigma\in[-s,\frac{1}{2}-s)$,
	\begin{equation*}
		\begin{aligned}
			&\|(z^{\alpha}+\mathcal{A}^{s})^{-1}\|_{\hat{H}^{\sigma}(\mathbb{D})\rightarrow H^{\sigma+2s}(\mathbb{R}^d)}\leq C
		\end{aligned}
	\end{equation*}
and
\begin{equation*}
	\|(z^{\alpha}+\mathcal{A}^{s})^{-1}\|_{\hat{H}^{\sigma}(\mathbb{D})\rightarrow \hat{H}^{\sigma+2s}(\mathbb{D})}\leq C\quad {\rm for}~ \sigma+2s\in[0,1].
\end{equation*}
	Then interpolation properties give the desired results.
	
\end{proof}

Further we can obtain the spatial regularity estimate of $u$.
\begin{theorem}\label{thmsobo}
	Let $u$ be the mild soltuion of Eq. $\eqref{equretosol}$ and $\|A^{-\rho}\|_{\mathcal{L}^{0}_{2}}<\infty$ with $\rho\in(\frac{s}{2}-\frac{1}{4},\min(\frac{s}{2},\frac{sH}{\alpha}-\epsilon)]$.
	Then there exists a positive constant $C$ such that
	\begin{equation*}
		\mathbb{E}\|A^{\sigma}u\|^{2}_{\mathbb{H}}\leq C,
	\end{equation*}
	where $2\sigma\leq \min(2s-2\rho,\frac{2sH}{\alpha}-2\rho-\epsilon,1)$ with $\epsilon>0$ arbitrarily small.
\end{theorem}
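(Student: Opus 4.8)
The plan is to estimate $\mathbb{E}\|A^{\sigma}u\|^2_{\mathbb{H}}$ directly from the mild solution representation \eqref{eqrepsol}. First I would expand $W^H_Q(s)=\sum_k\sqrt{\varLambda_k}\phi_k W^H_k(s)$, so that $A^{\sigma}u=\sum_k\sqrt{\varLambda_k}\int_0^t A^{\sigma}\mathcal{R}(t-\tau)\phi_k\,dW^H_k(\tau)$. Using the representation \eqref{eqrepfbm} of the Wiener integral with respect to fBm in terms of an ordinary Brownian motion, together with It\^o's isometry and the independence of the $W_k$'s, I would reduce $\mathbb{E}\|A^{\sigma}u\|^2_{\mathbb{H}}$ to a double time integral against the kernel $(r-\tau)^{H-3/2}(\tau/r)^{1/2-H}$ of the quantity $\sum_k\varLambda_k\|A^{\sigma}\mathcal{R}(r_1-\tau)\phi_k\|_{\mathbb{H}}\|A^{\sigma}\mathcal{R}(r_2-\tau)\phi_k\|_{\mathbb{H}}$ (roughly speaking), which after Cauchy--Schwarz is controlled by $\int_0^t\!\int_0^t (r_1 r_2)^{H-1/2}\big(\sum_k \varLambda_k\|A^{\sigma}\mathcal{R}(r_i-\tau)\phi_k\|^2\big)$-type expressions; the $\varLambda_k$-sum is exactly $\|A^{\sigma}\mathcal{R}(r-\tau)Q^{1/2}\|^2_{\mathcal{L}_2}$.

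The core of the argument is then a bound on $\|A^{\sigma}\mathcal{R}(t)Q^{1/2}\|_{\mathcal{L}_2}$. I would write $A^{\sigma}\mathcal{R}(t)Q^{1/2}=\big(A^{\sigma+\rho}\mathcal{R}(t)\big)\big(A^{-\rho}Q^{1/2}\big)$ and use that $A^{-\rho}Q^{1/2}\in\mathcal{L}^0_2$ (equivalently $\|A^{-\rho}\|_{\mathcal{L}^0_2}<\infty$ under the hypothesis) to get $\|A^{\sigma}\mathcal{R}(t)Q^{1/2}\|_{\mathcal{L}_2}\leq \|A^{\sigma+\rho}\mathcal{R}(t)\|_{\mathcal{L}(\mathbb{H})}\,\|A^{-\rho}\|_{\mathcal{L}^0_2}$. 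So everything comes down to estimating $\|A^{\sigma+\rho}\mathcal{R}(t)\|=\|A^{\sigma+\rho}\mathcal{R}(t)\|_{L^2\to L^2}$. Deforming the contour in \eqref{eqdefE} to $\Gamma_{\theta,\kappa}$ and invoking Lemma \ref{lemresolest} (with the identification $\hat H^{2\mu s}=\mathcal D(A^{\mu s})$ from Remark \ref{Remk2}, taking $\sigma=0$ there and $\mu s=\sigma+\rho$, i.e. $\mu=(\sigma+\rho)/s$), I get $\|A^{\sigma+\rho}\mathcal{R}(t)\|\leq C\int_{\Gamma_{\theta,\kappa}}|e^{zt}|\,|z|^{\alpha-1}|z|^{((\sigma+\rho)/s-1)\alpha}\,|dz|$. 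The admissibility condition $\mu\in[0,\min(1,\tfrac{1-\sigma}{2s})]$ in Lemma \ref{lemresolest} forces $\sigma+\rho\le s$ (and $\ge0$), which is one source of the constraint $2\sigma\le 2s-2\rho$; note that when $2s<1$ we may alternatively need the second branch of Lemma \ref{lemresolest} or just $\sigma+\rho\le s$. A standard estimate of this contour integral, splitting into the two rays and the arc and scaling $\varrho=\xi/t$, yields $\|A^{\sigma+\rho}\mathcal{R}(t)\|\leq C t^{-\alpha(\sigma+\rho)/s}$ (for $t$ bounded; the low-frequency part contributes a bounded term).

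Plugging this back, the time integral to control is, after Cauchy--Schwarz in the fBm kernel, of the form
\begin{equation*}
\int_0^t\!\!\int_0^t (r_1 r_2)^{\frac12-H}\!\int_0^{\min(r_1,r_2)}\!\!(r_1-\tau)^{H-\frac32}(r_2-\tau)^{H-\frac32}(r_1-\tau)^{-\frac{\alpha(\sigma+\rho)}{s}}(r_2-\tau)^{-\frac{\alpha(\sigma+\rho)}{s}}\,d\tau\,dr_1\,dr_2,
\end{equation*}
or more simply, via It\^o isometry rewritten as a single integral, $\int_0^t \|A^\sigma\mathcal R(\cdot)Q^{1/2}\|$-squared weighted appropriately; the upshot is that finiteness requires the exponent $H-\tfrac32-\tfrac{\alpha(\sigma+\rho)}{s}$ to be integrable near $\tau=r$, i.e. $H-\tfrac12-\tfrac{\alpha(\sigma+\rho)}{s}>-1$, equivalently $\tfrac{\alpha(\sigma+\rho)}{s}<H+\tfrac12$; combined with the $\mathcal L^0_2$-summability of $A^{-\rho}Q^{1/2}$ (which needs $\rho>\tfrac{s}{2}-\tfrac14$ so that the eigenvalue growth from Lemma \ref{thmeigenvalue} makes $\sum\varLambda_k\varkappa_k^{-2\rho}$ summable) and the requirement $\sigma+\rho\le s$, one arrives exactly at $2\sigma\le\min(2s-2\rho,\tfrac{2sH}{\alpha}-2\rho-\epsilon,1)$ with $\rho$ in the stated range. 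The $\epsilon$ is lost precisely at the borderline integrability of the fBm kernel (and in the $\tfrac12-\epsilon$ of Theorem \ref{thmregdiri} feeding Lemma \ref{lemresolest}).

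The main obstacle I anticipate is the rigorous handling of the fBm stochastic integral: one must justify the interchange of the infinite sum over $k$ with the stochastic integral, convert to the Brownian integral via \eqref{eqrepfbm}, and then carefully apply It\^o's isometry to a Hilbert-space-valued integrand so that the cross terms $(r_1-\tau)^{\cdots}(r_2-\tau)^{\cdots}$ are correctly bounded (a clean route is to use the known formula that for deterministic operator-valued $\Phi$, $\mathbb{E}\|\int_0^t\Phi(s)\,dW^H_Q(s)\|^2 \lesssim \int_0^t\!\int_0^t |r_1-r_2|^{2H-2}\langle\Phi(r_1)Q^{1/2},\Phi(r_2)Q^{1/2}\rangle_{\mathcal L_2}\,dr_1 dr_2$, then apply Cauchy--Schwarz). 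Everything else — the resolvent bound, the contour estimate, the eigenvalue-growth bookkeeping — is a routine, if slightly delicate, combination of Lemma \ref{lemresolest} and Lemma \ref{thmeigenvalue}.
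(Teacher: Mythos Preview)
Your overall strategy matches the paper's: start from the mild solution \eqref{eqrepsol}, convert the fBm integral to a Brownian one via \eqref{eqrepfbm}, apply It\^o's isometry, and control the resulting operator norm through the contour representation \eqref{eqdefE} together with Lemma~\ref{lemresolest}. The paper streamlines the last step by pulling the $(r'-r)^{H-3/2}$ integral inside the contour integral (its Laplace transform contributes a factor $|z|^{1/2-H}$), obtaining directly a bound of the form $C(t-r)^{H-1/2-(\rho+\sigma)\alpha/s}$ for the inner integral; after squaring and integrating against $r^{1-2H}\,dr$ the finiteness condition is $H-\tfrac12-\tfrac{(\rho+\sigma)\alpha}{s}>-\tfrac12$ (not $>-1$ as you wrote), i.e.\ $\sigma<\tfrac{sH}{\alpha}-\rho$.

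There is one genuine gap in how you invoke Lemma~\ref{lemresolest}. You factor $A^{\sigma}\mathcal R(t)Q^{1/2}=(A^{\sigma+\rho}\mathcal R(t))(A^{-\rho}Q^{1/2})$ and apply the lemma with source space $\hat H^{0}=L^{2}$ (``taking $\sigma=0$ there''). But for $s\ge\tfrac12$ the value $0$ lies outside the admissible source range $[-s,\tfrac12-s)$ of the lemma's first estimate, so the bound you quote is not available; and even after embedding $L^{2}\hookrightarrow\hat H^{\sigma_{0}}$ with $\sigma_{0}<\tfrac12-s$ you can reach at best $2(\sigma+\rho)\le 1$, i.e.\ $2\sigma\le 1-2\rho$, which is strictly weaker than the claimed $2\sigma\le 1$ when $\rho>0$. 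The paper instead bounds $\|\mathcal R(t-r')\|_{\hat H^{-2\rho}\to\hat H^{2\sigma}}$ (equivalently, factors as $A^{\sigma}\mathcal R(t)A^{\rho}$): the hypothesis $\rho\in(\tfrac s2-\tfrac14,\tfrac s2]$ is precisely what places $-2\rho$ in $[-s,\tfrac12-s)$, and then the constraint $\mu\le(1+2\rho)/(2s)$ yields exactly $2\sigma\le\min(2s-2\rho,1)$. Incidentally, this is the actual role of the lower bound $\rho>\tfrac s2-\tfrac14$; it is not an eigenvalue-summability condition as you suggest (the assumption $\|A^{-\rho}\|_{\mathcal L^{0}_{2}}<\infty$ is a hypothesis, not something to be verified from Lemma~\ref{thmeigenvalue}).
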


\begin{proof}
	Eq. \eqref{eqrepsol} and It\^{o}'s isometry give
	\begin{equation*}
		\begin{aligned}
			&\mathbb{E}\|A^{\sigma}u\|^{2}_{\mathbb{H}}\\
			= &\mathbb{E}\left \|\int_{0}^{t}A^{\sigma}\mathcal{R}(t-r)dW^{H}_{Q}(r)\right \|^{2}_{\mathbb{H}}\\
			\leq&C\mathbb{E}\left \|\int_{0}^{t}\int_{r}^{t}A^{\sigma}\mathcal{R}(t-r')(r'-r)^{H-\frac{3}{2}}\left(\frac{r}{r'}\right)^{\frac{1}{2}-H}dr'dW_{Q}(r)\right \|^{2}_{\mathbb{H}}\\
			\leq&C\int_{0}^{t}\left \|\int_{r}^{t}A^{\sigma}\mathcal{R}(t-r')(r'-r)^{H-\frac{3}{2}}\left(\frac{r}{r'}\right)^{\frac{1}{2}-H}dr'\right \|^{2}_{\mathcal{L}^{0}_{2}}dr.
		\end{aligned}
	\end{equation*}
	Using $H\in(\frac{1}{2},1)$, the condition $\|A^{-\rho}\|_{\mathcal{L}^{0}_{2}}< \infty$, $\eqref{eqdefE}$, and Lemma \ref{lemresolest}, we find that
	\begin{equation*}
		\begin{aligned}	
			\mathbb{E}\|A^{\sigma}u\|^{2}_{\mathbb{H}}\leq&Ct^{2H-1}\int_{0}^{t}\left \|\int_{r}^{t}A^{\sigma}\mathcal{R}(t-r')(r'-r)^{H-\frac{3}{2}}A^{\rho}dr'\right \|^{2}r^{1-2H}dr\\
			\leq&Ct^{2H-1}\int_{0}^{t}\left \|\int_{r}^{t}\mathcal{R}(t-r')(r'-r)^{H-\frac{3}{2}}dr'\right \|^{2}_{\hat{H}^{-2\rho}(\mathbb{D})\rightarrow \hat{H}^{2\sigma}(\mathbb{D})}r^{1-2H}dr\\
			\leq&Ct^{2H-1}\int_{0}^{t}\Big (\int_{\Gamma_{\theta,\kappa}}|e^{z(t-r)}||z|^{\alpha-1}\|(z^{\alpha}+\mathcal{A}^{s})^{-1}\|_{\hat{H}^{-2\rho}(\mathbb{D})\rightarrow \hat{H}^{2\sigma}(\mathbb{D})}\\
			&\qquad\qquad\qquad\qquad\qquad\qquad\cdot|z|^{\frac{1}{2}-H}|dz|\Big )^{2}r^{1-2H}dr\\
			\leq&Ct^{2H-1}\int_{0}^{t}\left (\int_{\Gamma_{\theta,\kappa}}|e^{z(t-r)}||z|^{\frac{\rho+\sigma}{s}\alpha-1+\frac{1}{2}-H}|dz|\right )^{2}r^{1-2H}dr\\
			\leq&Ct^{2H-1}\int_{0}^{t}(t-r)^{2(H-\frac{1}{2}-\frac{\rho+\sigma}{s}\alpha)}r^{1-2H}dr,\\
		\end{aligned}
	\end{equation*}
	where it is required that $-2\rho\in[-s,\frac{1}{2}-s)$ and $2\sigma\leq\min(2s-2\rho,1)$. Moreover, to preserve the boundness of $\mathbb{E}\|A^{\sigma}u\|^{2}_{\mathbb{H}}$, we need $H-\frac{1}{2}-\frac{\rho+\sigma}{s}\alpha>-\frac{1}{2}$, i.e., $0<\sigma<\frac{sH}{\alpha}-\rho$. Hence, the proof is completed.
\end{proof}

Lastly, we provide the H\"{o}lder regularity of the mild solution $u$.
\begin{theorem}\label{thmholder}
	Let $u$ be the mild solution of Eq. $\eqref{equretosol}$ and $\|A^{-\rho}\|_{\mathcal{L}^{0}_{2}}<\infty$ with $\rho\in[0,\min(\frac{s}{2},\frac{sH}{\alpha}-\epsilon)]$. Then $u$ satisfies
	\begin{equation*}
		\mathbb{E}\left \|u(t)-u(t-\tau)\right \|^{2}_{\mathbb{H}}\leq C\tau^{2\gamma},
	\end{equation*}
	where $\gamma\in(0, H-\frac{\rho\alpha}{s})$.
\end{theorem}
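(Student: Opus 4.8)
The plan is to mimic the structure of the proof of Theorem~\ref{thmsobo}, exploiting the mild representation \eqref{eqrepsol} together with It\^o's isometry, but now splitting the increment $u(t)-u(t-\tau)$ into two natural pieces. Writing
\begin{equation*}
u(t)-u(t-\tau)=\int_{t-\tau}^{t}\mathcal{R}(t-r)\,dW^{H}_{Q}(r)+\int_{0}^{t-\tau}\bigl(\mathcal{R}(t-r)-\mathcal{R}(t-\tau-r)\bigr)\,dW^{H}_{Q}(r)=:\mathrm{I}+\mathrm{II},
\end{equation*}
I would estimate $\mathbb{E}\|\mathrm{I}\|_{\mathbb{H}}^{2}$ and $\mathbb{E}\|\mathrm{II}\|_{\mathbb{H}}^{2}$ separately. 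For both I first convert the Wiener integral with respect to fBm into one with respect to Brownian motion via \eqref{eqrepfbm}, then apply It\^o's isometry to land on an $\mathcal{L}^{0}_{2}$-norm integral, and then insert $A^{-\rho}$ using the hypothesis $\|A^{-\rho}\|_{\mathcal{L}^{0}_{2}}<\infty$, reducing everything to operator norms $\|\mathcal{R}(\cdot)\|_{\hat{H}^{-2\rho}(\mathbb{D})\to L^{2}(\mathbb{D})}$ controlled through the contour formula \eqref{eqdefE} and Lemma~\ref{lemresolest} with $\sigma=0$, $\mu=\rho/s$ (so that the exponent $|z|^{(\rho/s-1)\alpha-1}$ appears on $\Gamma_{\theta,\kappa}$).

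For term $\mathrm{I}$, the computation is essentially the $\sigma=0$ case of Theorem~\ref{thmsobo} but with the outer time integral running only over $[t-\tau,t]$; after the contour estimate one is left with something like $t^{2H-1}\int_{t-\tau}^{t}(t-r)^{2(H-1/2-\rho\alpha/s)}r^{1-2H}\,dr$, and since the exponent $H-1/2-\rho\alpha/s>-1/2$ under the stated range of $\rho$, this integral is $O(\tau^{2H-2\rho\alpha/s})=O(\tau^{2\gamma})$ for any $\gamma<H-\rho\alpha/s$. For term $\mathrm{II}$, I would write the difference of resolvent kernels as $\mathcal{R}(t-r)-\mathcal{R}(t-\tau-r)=\frac{1}{2\pi\mathbf{i}}\int_{\Gamma_{\theta,\kappa}}(e^{z(t-r)}-e^{z(t-\tau-r)})z^{\alpha-1}(z^{\alpha}+\mathcal{A}^{s})^{-1}\,dz$ and use the elementary bound $|e^{z(t-\tau-r)}||e^{z\tau}-1|\le C\min(1,|z|\tau)\,e^{-c|z|(t-\tau-r)}$; bounding $\min(1,|z|\tau)\le (|z|\tau)^{\beta}$ for a suitable $\beta\in[0,1]$ trades a power of $\tau$ against a power of $|z|$ in the contour integral, and choosing $\beta$ at the threshold allowed by integrability yields again $O(\tau^{2\gamma})$.

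The main obstacle I anticipate is the interplay between the singular weights $(r'-r)^{H-3/2}$ coming from \eqref{eqrepfbm} and the factor $\left(\frac{r}{r'}\right)^{1/2-H}$, together with the $r^{1-2H}$ weight after It\^o's isometry, when these are combined with the $\min(1,|z|\tau)$ trade-off in term $\mathrm{II}$: one has to interchange the (inner) $dr'$ integral, the contour integral, and the (outer) $dr$ integral carefully, and verify that the exponents line up so that both the spatial contour integral over $\Gamma_{\theta,\kappa}$ converges (requiring roughly $\beta\alpha/s+\rho\alpha/s<H+\beta-1/2$, i.e. the same $\gamma<H-\rho\alpha/s$ budget) and the temporal integral over $r$ stays finite near $r=0$ and near $r=t-\tau$. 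I expect that, as in Theorem~\ref{thmsobo}, a bound of the form $\int_{0}^{t-\tau}(t-\tau-r)^{2(H-1/2-(\beta+\rho)\alpha/s)}r^{1-2H}\,dr\cdot\tau^{2\beta\alpha/s}$ emerges, which is finite and $O(1)$ precisely when $\beta+\rho<sH/\alpha$, leaving $\tau^{2\beta\alpha/s}$ with $\beta$ chosen so that $\gamma=\beta\alpha/s$ can approach $H-\rho\alpha/s$ — but only from below, which is why the statement requires $\gamma\in(0,H-\rho\alpha/s)$ rather than an endpoint. Combining the two pieces via the triangle inequality then completes the proof.
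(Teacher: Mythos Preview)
Your overall strategy mirrors the paper's, and your treatment of term $\mathrm{II}$ is essentially the paper's estimate of its term $\mathrm{I}$ (the paper uses $|e^{z\tau}-1|\leq \tau^{\gamma}|z|^{\gamma}$ directly, which is your interpolation bound with $\beta=\gamma$; note that the $\tau$-power you actually obtain is $\tau^{2\beta}$, not $\tau^{2\beta\alpha/s}$, so that $\gamma=\beta$ and the integrability constraint reads $\beta<H-\rho\alpha/s$). However, there is a genuine gap in your handling of term $\mathrm{I}$. The representation \eqref{eqrepfbm} is \emph{nonlocal} in the outer variable: writing $\mathrm{I}=\int_{0}^{t}\mathcal{R}(t-r)\chi_{(t-\tau,t]}(r)\,dW^{H}_{Q}(r)$ and applying \eqref{eqrepfbm} on $[0,t]$, the outer Brownian integral runs over all of $[0,t]$, not just $[t-\tau,t]$. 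For $r\in[t-\tau,t]$ the inner $r'$-integral is $\int_{r}^{t}\mathcal{R}(t-r')(r'-r)^{H-3/2}(r/r')^{1/2-H}\,dr'$, which is what you account for; but for $r\in[0,t-\tau]$ the inner integral becomes $\int_{t-\tau}^{t}\mathcal{R}(t-r')(r'-r)^{H-3/2}(r/r')^{1/2-H}\,dr'$, and this cross term --- the paper's term $\mathrm{III}$ --- is missing from your analysis.

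This missing piece does not succumb to the same estimate as the part you kept: the outer $r$-integral now runs over the long interval $[0,t-\tau]$ (so $r^{1-2H}$ contributes nothing small), while the inner $r'$-integral is over the short window $[t-\tau,t]$ with the weight $(r'-r)^{H-3/2}$ evaluated away from its singularity. The paper treats it by the factorisation $(r'-r)^{H-3/2}\le (t-\tau-r)^{-1/2+\epsilon}(r'-(t-\tau))^{H-1-\epsilon}$, valid for $r<t-\tau<r'$ since $r'-r$ dominates both $t-\tau-r$ and $r'-(t-\tau)$ and both exponents are negative; this decouples the $r$- and $r'$-integrals and yields $C\tau^{2(H-\epsilon-\rho\alpha/s)}$. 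Once you add this third piece, your two-term split, after conversion to Brownian motion, is exactly the paper's three-term split $\mathrm{I}+\mathrm{II}+\mathrm{III}$.
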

\begin{proof}
	Using
	\eqref{eqrepfbm}, we arrive at
	\begin{equation*}
		\begin{aligned}
			&\mathbb{E}\left\|\frac{u(t)-u(t-\tau)}{\tau^{\gamma}}\right \|_{\mathbb{H}}^{2}\\
			=&\mathbb{E}\left \|\frac{1}{\tau^{\gamma}}\left(\int_{0}^{t}\mathcal{R}(t-r)dW^{H}_{Q}(r)-\int_{0}^{t-\tau}\mathcal{R}(t-\tau-r)dW^{H}_{Q}(r)\right )\right \|_{\mathbb{H}}^{2}\\
			\leq &C\mathbb{E}\Big \|\frac{1}{\tau^{\gamma}}
			\Big(\int_{0}^{t}\int_{r}^{t}\mathcal{R}(t-r')(r'-r)^{H-\frac{3}{2}}\left (\frac{r}{r'}\right )^{\frac{1}{2}-H}dr'dW_{Q}(r)\\
			&\quad-\int_{0}^{t-\tau}\int_{r}^{t-\tau}\mathcal{R}(t-\tau-r')(r'-r)^{H-\frac{3}{2}}\left (\frac{r}{r'}\right )^{\frac{1}{2}-H}dr'dW_{Q}(r)\Big)\Big \|_{\mathbb{H}}^{2}\\
			\leq&C\mathbb{E}\Bigg \|\frac{1}{\tau^{\gamma}}\int_{0}^{t-\tau}\int_{r}^{t-\tau}(\mathcal{R}(t-r')-\mathcal{R}(t-\tau-r'))\\
			&\qquad\qquad\qquad\qquad\qquad\qquad\cdot(r'-r)^{H-\frac{3}{2}}\left (\frac{r}{r'}\right )^{\frac{1}{2}-H}dr'dW_{Q}(r)\Bigg \|_{\mathbb{H}}^{2}\\
			&+C\mathbb{E}\Bigg \|\frac{1}{\tau^{\gamma}}\int_{t-\tau}^{t}\int_{r}^{t}\mathcal{R}(t-r')(r'-r)^{H-\frac{3}{2}}\left (\frac{r}{r'}\right )^{\frac{1}{2}-H}dr'dW_{Q}(r)\Bigg \|_{\mathbb{H}}^{2}\\
			&+C\mathbb{E}\Bigg \|\frac{1}{\tau^{\gamma}}\int_{0}^{t-\tau}\int_{t-\tau}^{t}\mathcal{R}(t-r')(r'-r)^{H-\frac{3}{2}}\left (\frac{r}{r'}\right )^{\frac{1}{2}-H}dr'dW_{Q}(r)\Bigg \|_{\mathbb{H}}^{2}\\
			\leq&\uppercase\expandafter{\romannumeral1}+\uppercase\expandafter{\romannumeral2}+\uppercase\expandafter{\romannumeral3}.
		\end{aligned}
	\end{equation*}
	
Using It\^{o}'s isometry \cite{Peter1983,Mishura_2008}, Lemma \ref{lemresolest}, and $\left |\frac{e^{z\tau}-1}{\tau^{\gamma}}\right |\leq |z|^{\gamma}$ with $\gamma>0$ \cite{gunzburger_2018} gives
	\begin{equation*}
		\begin{aligned}
			&\uppercase\expandafter{\romannumeral1}\\
			\leq& C(t-\tau)^{2H-1}\int_{0}^{t-\tau}\left\|\int_{r}^{t-\tau}\frac{\mathcal{R}(t-r')-\mathcal{R}(t-\tau-r')}{\tau^{\gamma}}(r'-r)^{H-\frac{3}{2}}dr'\right \|_{\mathcal{L}^{0}_{2}}^{2}r^{1-2H}dr\\
			\leq& C(t-\tau)^{2H-1}\int_{0}^{t-\tau}\left\|\int_{\Gamma_{\theta, \kappa}}e^{z(t-\tau-r)}\frac{e^{z\tau}-1}{\tau^{\gamma}}z^{\alpha-\frac{1}{2}-H}(z^{\alpha}+\mathcal{A}^{s})^{-1}A^{\rho}dz\right \|^{2}r^{1-2H}dr\\
			\leq& C(t-\tau)^{2H-1}\int_{0}^{t-\tau}(t-\tau-r)^{2(H-\frac{1}{2}-\gamma-\frac{\rho\alpha}{s})}r^{1-2H}dr,
		\end{aligned}
	\end{equation*}
	where the last inequality holds when $-2\rho\in [-s,0]$ and $\gamma>0$. To preserve the boundness of $\uppercase\expandafter{\romannumeral1}$, it also needs that $\gamma\in(0,H-\frac{\rho\alpha}{s})$.
	
	Using It\^{o}'s isometry and Lemma \ref{lemresolest} again, we get
	\begin{equation*}
		\begin{aligned}
			\uppercase\expandafter{\romannumeral2}\leq& C\frac{1}{\tau^{2\gamma}}t^{2H-1}\int_{t-\tau}^{t}\left \|\int_{r}^{t}\mathcal{R}(t-r')(r'-r)^{H-\frac{3}{2}}dr'\right \|_{\mathcal{L}^{0}_{2}}^{2}r^{1-2H}dr\\
			\leq& C\frac{1}{\tau^{2\gamma}}t^{2H-1}\int_{t-\tau}^{t}\left \|\int_{\Gamma_{\theta, \kappa}}e^{z(t-r)}z^{\alpha-1}(z^{\alpha}+\mathcal{A}^{s})^{-1}A^{\rho}z^{\frac{1}{2}-H}dz\right \|^{2}r^{1-2H}dr\\
			\leq& C\frac{1}{\tau^{2\gamma}}t^{2H-1}\int_{t-\tau}^{t}(t-r)^{2(H-\frac{1}{2}-\frac{\rho\alpha}{s})}r^{1-2H}dr,\\
		\end{aligned}
	\end{equation*}
	where the last inequality holds when $-2\rho\in [-s,0]$, and we require $\gamma\in(0,H-\frac{\rho\alpha}{s})$ to ensure the boundness of $\uppercase\expandafter{\romannumeral2}$.
	
	Similarly, for $\uppercase\expandafter{\romannumeral3}$, we obtain
	\begin{equation*}
		\begin{aligned}
			\uppercase\expandafter{\romannumeral3}\leq&C\frac{1}{\tau^{2\gamma}}\int_{0}^{t-\tau}\left \|\int_{t-\tau}^{t}\mathcal{R}(t-r')(r'-r)^{H-\frac{3}{2}}A^{\rho}r'^{H-\frac{1}{2}}dr'\right \|^{2}r^{1-2H}dr\\
			\leq&C\frac{1}{\tau^{2\gamma}}t^{2H-1}\int_{0}^{t-\tau}(t-\tau-r)^{-1+2\epsilon}r^{1-2H}\\
			&\qquad\qquad\qquad\qquad\qquad\cdot\left \|\int_{t-\tau}^{t}\mathcal{R}(t-r')A^{\rho}(r'-(t-\tau))^{H-1-\epsilon}dr'\right \|^{2}dr\\
			\leq &C\frac{1}{\tau^{2\gamma}}\tau^{2(H-\epsilon-\frac{\rho\alpha}{s})},
		\end{aligned}
	\end{equation*}
	where the above inequalities hold when $-2\rho\in [-s,0]$ and $\gamma>0$. We require $\gamma\in(0,H-\frac{\rho\alpha}{s})$ to preserve the boundness of $\uppercase\expandafter{\romannumeral3}$. Thus, combining $\uppercase\expandafter{\romannumeral1}$, $\uppercase\expandafter{\romannumeral2}$, and $\uppercase\expandafter{\romannumeral3}$ leads to the desired estimate.
\end{proof}
\section{Time discretization and error analysis}
In this section, we turn to the discretization in time. Backward Euler convolution quadrature introduced in \cite{lubich_1988-1,lubich_1988} is used to discretize the Riemann-Liouville fractional derivative; and with the help of It\^{o}'s isometry, we obtain the corresponding error estimates of the temporal semi-discrete scheme.

Denote time step size $\tau=\frac{T}{N}$ ($N\in\mathbb{N}$) and $t_i=i\tau$, $i=0,1,\ldots,N$, where $T$ is a fixed terminal time. Using backward Euler convolution quadrature method \cite{lubich_1988-1,lubich_1988,lubich_1996}, we have the semi-discrete scheme of \eqref{equretosol} as
\begin{equation}\label{eqfullscheme}
	\frac{u^{n}-u^{n-1}}{\tau}+\sum_{i=0}^{n-1}d^{(1-\alpha)}_{i}\mathcal{A}^{s}u^{n-i}=\bar{\partial}_{\tau} W^{H}_{Q}(t_{n}),
\end{equation}
where $\{d_{i}^{(\alpha)}\}_{i=0}^{\infty}$ can be obtained by
\begin{equation}\label{eqdefd}
	(\delta_{\tau}(\zeta))^{\alpha}=\left(\frac{1-\zeta}{\tau}\right )^{\alpha}=\sum_{i=1}^{\infty}d^{(\alpha)}_{i}\zeta^{i},\quad \alpha\in(0,1)
\end{equation}
and
\begin{equation}
	\bar{\partial}_{\tau} W^{H}_{Q}(t)=\left\{
	\begin{aligned}
		&0\qquad t=t_{0},\\
		&\frac{W^{H}_{Q}(t_{j})-W^{H}_{Q}(t_{j-1})}{\tau}\qquad t\in(t_{j-1},t_{j}],\\
		&0\qquad t>t_{N}.
	\end{aligned} \right.
\end{equation}

Introduce the notation `$\tilde{~}$' as Laplace transform. The fact \cite{gunzburger_2018}
\begin{equation*}
	\sum_{n=1}^{\infty}\bar{\partial}_{\tau} W^{H}_{Q}(t_{n})e^{-zt_{n}}=\frac{z}{e^{z\tau}-1} \widetilde{\bar{\partial}_{\tau}W^{H}_{Q}},
\end{equation*}
and simple calculations (which can refer to \cite{gunzburger_2018,nie20202}) lead to
that the solution of Eq. \eqref{eqfullscheme} can be represented by
\begin{equation*}
	u^{n}=\frac{1}{2\pi\mathbf{i}}\int_{\Gamma^{\tau}_{\theta,\kappa}}e^{zt_{n}}(\delta_{\tau}(e^{-z\tau}))^{\alpha-1}((\delta_{\tau}(e^{-z\tau}))^{\alpha}+\mathcal{A}^{s})^{-1}\frac{z\tau}{e^{z\tau}-1} \widetilde{\bar{\partial}_{\tau}W^{H}_{Q}}dz,
\end{equation*}
where $\Gamma^\tau_{\theta,\kappa}=\{z\in \mathbb{C}:\kappa\leq |z|\leq\frac{\pi}{\tau\sin(\theta)},|\arg z|=\theta\}\cup\{z\in \mathbb{C}:|z|=\kappa,|\arg z|\leq\theta\}$.
Let
\begin{equation*}
	\bar{\mathcal{R}}(t)=\frac{1}{2\pi\mathbf{i}}\int_{\Gamma^{\tau}_{\theta,\kappa}}e^{zt}(\delta_{\tau}(e^{-z\tau}))^{\alpha-1}((\delta_{\tau}(e^{-z\tau}))^{\alpha}+\mathcal{A}^{s})^{-1}\frac{z\tau}{e^{z\tau}-1} dz;
\end{equation*}
and the convolution property of Laplace transform gives
\begin{equation}\label{eqrepsolfull}
	u^{n}=\int_{0}^{t_{n}}\bar{\mathcal{R}}(t_{n}-s)\bar{\partial}_{\tau}W^{H}_{Q}(s)ds.
\end{equation}

Then we recall a lemma in \cite{gunzburger_2018}, which is used to estimate $\mathbb{E}\|u(t_{n})-u^{n}\|_{\mathbb{H}}^{2}$.
\begin{lemma}[\cite{gunzburger_2018}]\label{Lemseriesest}
	Let $\delta_{\tau}$ be defined in \eqref{eqdefd}, $\Gamma^\tau_\xi=\{z=-\ln{\xi}/\tau+\mathbf{i}y:y\in\mathbb{R}~and~|y|\leq \pi/\tau\}$ with a fixed $\xi \in (0,1)$, and $\theta \in\left(\frac{\pi}{2}, \operatorname{arccot}\left(-\frac{2}{\pi}\right)\right)$,  where $arccot$ denotes the inverse function of $\cot$. If $z$ lies in the region enclosed by $\Gamma^\tau_\xi$, $\Gamma^\tau_{\theta,\kappa}$, and the two lines $\mathbb{R}\pm \mathbf{i}\pi/\tau$ with $0<\kappa \leq \min (1 / T,-\ln (\xi) / \tau)$, then we have
	\begin{enumerate}
		\item $\delta_\tau(e^{-z\tau})$ and $(\delta_\tau(e^{-z\tau})+A)^{-1}$ are both analytic;
		\item there exist positive constants $C_0$, $C_1$, and $C$  such that
		\begin{equation*}
			\begin{aligned}
				&\delta_{\tau}\left(e^{-z \tau}\right) \in \Sigma_{\theta}&\forall z \in \Gamma_{\theta, \kappa}^{\tau},\\
				&C_{0}|z| \leq\left|\delta_{\tau}\left(e^{-z\tau }\right)\right| \leq C_{1}|z|&\forall z \in \Gamma_{\theta, \kappa}^{\tau},\\
				&\left|\delta_{\tau}\left(e^{-z\tau }\right)-z\right| \leq C \tau|z|^{2}&\forall z \in \Gamma_{\theta, \kappa}^{\tau},\\
				&\left|\delta_{\tau}\left(e^{-z\tau }\right)^{\alpha}-z^{\alpha}\right| \leq C \tau|z|^{\alpha+1}&\forall z \in \Gamma_{\theta, \kappa}^{\tau},
			\end{aligned}
		\end{equation*}
		where $\kappa\in (0,\min (1 / T,-\ln (\xi) / \tau))$. Here, $C_{0}$, $C_{1}$, and $C$ are independent of $\tau$.
	\end{enumerate}

\end{lemma}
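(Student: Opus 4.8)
My plan is to reduce everything, via the substitution $w=z\tau$, to the single entire function $g(w)=1-e^{-w}$, since $\delta_\tau(e^{-z\tau})=g(z\tau)/\tau$. The structural point is that the truncation $|z|\leq\pi/(\tau\sin\theta)$ built into $\Gamma^\tau_{\theta,\kappa}$, together with the bounding lines $\mathbb{R}\pm\mathbf{i}\pi/\tau$, confines $w$ to the fixed compact disk $D=\{|w|\leq\pi/\sin\theta\}$ and to the strip $|\mathrm{Im}\,w|\leq\pi$; in particular the zeros $2\pi\mathbf{i}k$ $(k\neq0)$ of the entire function $\phi(w)=g(w)/w$ (with $\phi(0)=1$) all lie outside the region under consideration, and $g$ itself vanishes there only at $w=0$, i.e.\ $z=0$, which sits inside the arc $|z|=\kappa$ and is therefore excluded. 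I would establish the four estimates of part (2) on $D$ first and deduce part (1) from them, the constants throughout depending only on $\theta$ (and, for the last estimate, on $\alpha$) and not on $\tau$.

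The two-sided bound and the two consistency estimates are soft. Boundedness of $\phi$ on $D$ gives $|g(w)|\leq C_1|w|$ throughout; since $\phi$ has no zeros in $|w|\leq1$ one has $|\phi(w)|\geq c>0$ there, hence $|g(w)|\geq c|w|$ on the arc, where $|w|=\kappa\tau<1$; and on the rays, where $\mathrm{Re}\,w=|w|\cos\theta<0$, one also has $|g(w)|\geq e^{-\mathrm{Re}\,w}-1\geq|w|\,|\cos\theta|$. Dividing by $\tau$ and using $|w|=\tau|z|$ gives $C_0|z|\leq|\delta_\tau(e^{-z\tau})|\leq C_1|z|$. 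Taylor's theorem gives $|g(w)-w|\leq\frac{1}{2}|w|^2e^{|w|}\leq C|w|^2$ on $D$, hence $|\delta_\tau(e^{-z\tau})-z|\leq C\tau|z|^2$. Finally, the elementary inequality $|a^\alpha-b^\alpha|\leq C|a-b|\,(|a|+|b|)^{\alpha-1}$ for $a,b\in\Sigma_\theta$, combined with $\alpha-1<0$ and the two previous bounds, yields $|\delta_\tau(e^{-z\tau})^\alpha-z^\alpha|\leq C(\tau|z|^2)\,|z|^{\alpha-1}=C\tau|z|^{\alpha+1}$; the powers are unambiguous because $z$ and $\delta_\tau(e^{-z\tau})$ both lie in $\Sigma_\theta$ with $\theta<\pi$ once the sector estimate is in hand.

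The sector estimate $\delta_\tau(e^{-z\tau})\in\Sigma_\theta$, i.e.\ $|\arg g(w)|\leq\theta$, is the delicate step, and it is precisely here that the hypothesis $\theta<\operatorname{arccot}(-2/\pi)$ is consumed. On the arc, $|w|=\kappa\tau$ is small and $g(w)=w(1+O(w))$, so $\arg g(w)=\arg w+O(|w|)$ lies in $[-\theta,\theta]$ once $\kappa$ is taken small enough — which is why $\kappa$ is allowed to depend on $\tau$ and $T$. On a ray $w=\varrho e^{\pm\mathbf{i}\theta}$, $\kappa\tau\leq\varrho\leq\pi/\sin\theta$, I would write $\mathrm{Re}\,g(w)$ and $\mathrm{Im}\,g(w)$ explicitly as functions of $\varrho$ and verify $|\arg g(w)|\leq\theta$ directly; the inequality is tightest at $\varrho=\pi/\sin\theta$, where $\mathrm{Im}\,w=\pm\pi$ and $e^{-w}$ is a negative real number, and pushing that worst case through is exactly what forces $\cot\theta\leq-2/\pi$. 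Part (1) then follows: $\delta_\tau(e^{-z\tau})$ is entire in $z$ and nonvanishing on the region, it lies in $\Sigma_\theta$ on the whole boundary (on the arc and rays by the above, and on the segments of $\mathbb{R}\pm\mathbf{i}\pi/\tau$ since there $e^{-z\tau}$ is a negative real number so $\mathrm{Re}\,g\geq1$), and because the region is simply connected the harmonic function $\arg g$ attains its extrema on the boundary, so $|\arg g|\leq\theta<\pi$ throughout; hence $-\delta_\tau(e^{-z\tau})$ stays in the resolvent set of the positive self-adjoint operator $A$ and $(\delta_\tau(e^{-z\tau})+A)^{-1}$ is analytic there.

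I expect the ray computation that pins down $\operatorname{arccot}(-2/\pi)$ to be the only genuine obstacle: one has to control $\mathrm{Re}\,g$ and $\mathrm{Im}\,g$ simultaneously as $\varrho$ climbs to the truncation radius, and the elementary inequality that keeps $\arg g$ below $\theta$ is exactly the one that degenerates at $\theta=\operatorname{arccot}(-2/\pi)$. Everything else reduces to boundedness of $\phi$ on the fixed disk $D$, a Taylor remainder, and the standard sector inequality for fractional powers.
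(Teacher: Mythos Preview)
The paper does not prove this lemma; it is quoted with attribution to \cite{gunzburger_2018} and used as a black box, so there is no in-paper argument to compare against. Your proposal is essentially the standard proof that appears in that reference and in the earlier backward-Euler convolution-quadrature literature (Lubich, Jin--Lazarov--Zhou): rescale via $w=z\tau$, reduce to the entire function $g(w)=1-e^{-w}$ on the fixed disk $|w|\le\pi/\sin\theta$, obtain the two-sided bound and the Taylor consistency estimates softly, and isolate the sector inclusion as the only hard step. Your maximum-principle argument for part~(1) is the right way to pass from the boundary contour to the enclosed region.

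Two small corrections to the ray discussion. First, the hypothesis $\theta<\operatorname{arccot}(-2/\pi)$ unpacks to $\cot\theta>-2/\pi$, not $\le$; you have the inequality reversed. Second, the endpoint $\varrho=\pi/\sin\theta$ is not where the sector inclusion is tightest: there $\mathrm{Im}\,w=\pi$ forces $e^{-w}<0$ real, hence $g(w)=1+e^{-\varrho\cos\theta}>0$ and $\arg g(w)=0$, which is trivially inside $\Sigma_\theta$. The genuine constraint comes near $\varrho=0^+$, where $g(w)\sim w=\varrho e^{\mathbf{i}\theta}$ sits exactly on the sector boundary $\partial\Sigma_\theta$; one must verify that the curve $\varrho\mapsto g(\varrho e^{\mathbf{i}\theta})$ bends \emph{into} the sector rather than out of it as $\varrho$ increases, and never re-exits before reaching the positive real axis. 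Differentiating and tracking the sign of $\mathrm{Im}\bigl(e^{-\mathbf{i}\theta}g'(\varrho e^{\mathbf{i}\theta})\bigr)$ along the ray is where the numerical constraint $\cot\theta>-2/\pi$ is actually consumed. With that adjustment your outline is complete.
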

In the following proof, we need to take $\kappa\leq\frac{\pi}{t_{n}|\sin(\theta)|}$.
Then we provide the error estimates for the time semi-discrete scheme \eqref{eqfullscheme}.

\begin{theorem}\label{thmtimeerror}
	Let $u(t)$ and $u^{n}$ be the solutions of Eqs. \eqref{equretosol} and \eqref{eqfullscheme}, respectively. Assume $\|A^{-\rho}\|_{\mathcal{L}^{0}_{2}}<\infty$ with $\rho\in[0,\min(\frac{s}{2},\frac{sH}{\alpha})]$. 
	Then
	\begin{equation*}
		\mathbb{E}\|u(t_{n})-u^{n}\|_{\mathbb{H}}^{2}\leq C\tau^{2H-\frac{2\rho\alpha}{s}}.
	\end{equation*}
\end{theorem}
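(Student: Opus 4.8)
The plan is to estimate $\mathbb{E}\|u(t_n)-u^n\|_{\mathbb{H}}^2$ by combining the integral representations \eqref{eqrepsol}–\eqref{eqdefE} and \eqref{eqrepsolfull}, converting the Wiener integral with respect to fBm into an integral with respect to Brownian motion via \eqref{eqrepfbm}, and then applying It\^{o}'s isometry exactly as in the proofs of Theorems \ref{thmsobo} and \ref{thmholder}. After the conversion the quantity to control is
\begin{equation*}
	\mathbb{E}\|u(t_n)-u^n\|_{\mathbb{H}}^2 \leq C t_n^{2H-1}\int_0^{t_n}\left\|\int_r^{t_n}\big(\mathcal{R}(t_n-r')\chi_{(0,t_n)}(r')-\text{(discrete analogue)}\big)(r'-r)^{H-\frac{3}{2}}A^\rho dr'\right\|^2 r^{1-2H}\,dr,
\end{equation*}
so the real work is a pointwise (in $r'$) bound on the difference between the continuous kernel $\mathcal{R}$ and the discrete kernel $\bar{\mathcal{R}}$ (plus bookkeeping for the piecewise-constant approximation $\bar\partial_\tau W^H_Q$ of the noise increments). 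I would split the error into a "kernel error" piece $u(t_n)-\int_0^{t_n}\bar{\mathcal{R}}(t_n-s)\,dW^H_Q(s)$ and a "quadrature-of-noise" piece $\int_0^{t_n}\bar{\mathcal{R}}(t_n-s)\,d(W^H_Q-\text{its piecewise average})(s)$, handling each by moving to the Laplace/contour side.

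For the kernel-error piece, the key inputs are the defining contour integrals for $\mathcal{R}$ and $\bar{\mathcal{R}}$ together with Lemma \ref{Lemseriesest}: on $\Gamma^\tau_{\theta,\kappa}$ one has $|\delta_\tau(e^{-z\tau})^\alpha - z^\alpha|\leq C\tau|z|^{\alpha+1}$ and $|\tfrac{z\tau}{e^{z\tau}-1}-1|\leq C\tau|z|$, and from the resolvent estimate Lemma \ref{lemresolest} (with $\sigma=-2\rho\in[-s,0]$, mapping into $L^2$) one gets $\|(z^\alpha+\mathcal{A}^s)^{-1}A^\rho\|\leq C|z|^{(\frac{\rho}{s}-1)\alpha}$. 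Writing the difference $z^{\alpha-1}(z^\alpha+\mathcal{A}^s)^{-1}-\delta_\tau^{\alpha-1}(\delta_\tau^\alpha+\mathcal{A}^s)^{-1}\tfrac{z\tau}{e^{z\tau}-1}$ as a telescoping sum and using the resolvent identity $(\delta_\tau^\alpha+\mathcal{A}^s)^{-1}-(z^\alpha+\mathcal{A}^s)^{-1}=(z^\alpha-\delta_\tau^\alpha)(\delta_\tau^\alpha+\mathcal{A}^s)^{-1}(z^\alpha+\mathcal{A}^s)^{-1}$, each term gains a factor $\tau|z|$ relative to the bound used in Theorem \ref{thmholder}, i.e. the integrand picks up $\tau|z|^{\frac{\rho\alpha}{s}-1+\frac12-H}\cdot|z|$, and the contour integral $\int_{\Gamma^\tau_{\theta,\kappa}}|e^{z(t_n-r')}||z|^{\frac{\rho\alpha}{s}-H+\frac12}|dz|$ behaves like $(t_n-r')^{-H-\frac12+\frac{\rho\alpha}{s}}$ (after also accounting for the truncation of the contour at $|z|\sim \tau^{-1}$, which contributes at the same order). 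Plugging this into the double integral above and evaluating the Beta-type integral $\int_0^{t_n}(t_n-r)^{2(H-\frac12-(\frac{\rho\alpha}{s}-... ))}r^{1-2H}dr$ — exactly the computation already done for term $\romannumeral1$ in Theorem \ref{thmholder} but with an extra $\tau^2$ out front — yields the claimed $\tau^{2H-\frac{2\rho\alpha}{s}}$.

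The noise-quadrature piece is essentially the H\"older-continuity estimate of Theorem \ref{thmholder} in disguise: on each subinterval $(t_{j-1},t_j]$ the replacement of $dW^H_Q$ by its average increment $\bar\partial_\tau W^H_Q$ costs a factor $\tau^\gamma$ with $\gamma$ up to $H-\frac{\rho\alpha}{s}$, by the same It\^o-isometry-plus-$|\frac{e^{z\tau}-1}{\tau^\gamma}|\leq|z|^\gamma$ argument used for term $\romannumeral1$ there; choosing $2\gamma$ as large as allowed reproduces the exponent $2H-\frac{2\rho\alpha}{s}$. The condition $\rho\in[0,\min(\frac s2,\frac{sH}{\alpha})]$ is exactly what makes $-2\rho\in[-s,0]$ (so Lemma \ref{lemresolest} applies in the needed range) and $H-\frac{\rho\alpha}{s}>0$ (so the resulting power of $t_n-r$ is integrable against $r^{1-2H}$). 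The main obstacle I anticipate is the careful pointwise estimate of the \emph{difference} of the two operator-valued contour integrands — in particular keeping track of the contour truncation in $\Gamma^\tau_{\theta,\kappa}$ and verifying that the arc $|z|=\pi/(\tau\sin\theta)$ contributes no worse than $\tau^{2H-\frac{2\rho\alpha}{s}}$ — together with confirming that the $t_n$-dependent prefactors stay bounded (using $\kappa\leq \pi/(t_n|\sin\theta|)$) so the final bound is uniform in $n$; the surrounding stochastic machinery is routine given Theorems \ref{thmsobo} and \ref{thmholder}.
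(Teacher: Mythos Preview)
Your proposal is correct and follows essentially the same approach as the paper: the same two-term split into a kernel-error piece $\vartheta_1$ (comparing $\mathcal{R}$ with $\bar{\mathcal{R}}$ against the same stochastic integral) and a noise-quadrature piece $\vartheta_2$, the same passage to Brownian motion via \eqref{eqrepfbm} followed by It\^o's isometry, and the same use of Lemmas \ref{lemresolest} and \ref{Lemseriesest} together with a telescoping/resolvent-identity decomposition of the symbol difference on $\Gamma^\tau_{\theta,\kappa}$, with the contour-truncation piece $\Gamma_{\theta,\kappa}\setminus\Gamma^\tau_{\theta,\kappa}$ handled separately. The only stylistic difference is that for $\vartheta_2$ the paper writes out an explicit operator $\mathcal{G}(t_n-r)$ and bounds it via $|1-\tfrac{z\tau}{e^{z\tau}-1}|\leq C\tau|z|$ and $|1-e^{z(r-t_i)}|\leq C\tau|z|$ rather than invoking the H\"older estimate of Theorem \ref{thmholder} as you do, but the underlying mechanism and resulting exponent are identical.
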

\begin{proof}
	Subtracting \eqref{eqrepsolfull} from  \eqref{eqrepsol} and taking the expectation for $\|u(t_{n})-u^{n}\|^{2}_{\mathbb{H}}$ yield
	\begin{equation}\label{equtimeerrorrep}
		\begin{aligned}
			&\mathbb{E}\|u(t_{n})-u^{n}\|_{\mathbb{H}}^{2}\\
			=&\mathbb{E}\left \|\int_{0}^{t_{n}}\mathcal{R}(t_{n}-r)dW^{H}_{Q}(r)-\int_{0}^{t_{n}}\bar{\mathcal{R}}(t_{n}-r)(\bar{\partial}_{\tau}W^{H}_{Q}(r))dr\right \|_{\mathbb{H}}^{2}\\
			\leq&\mathbb{E}\left \|\int_{0}^{t_{n}}(\mathcal{R}(t_{n}-r)-\bar{\mathcal{R}}(t_{n}-r))dW^{H}_{Q}(r)\right \|_{\mathbb{H}}^{2}\\
			&+\mathbb{E}\left \|\int_{0}^{t_{n}}\bar{\mathcal{R}}(t_{n}-r)(dW^{H}_{Q}(r)-\bar{\partial}_{\tau}W^{H}_{Q}(r)dr)\right \|_{\mathbb{H}}^{2}\\
			=&\vartheta_{1}+\vartheta_{2}.
		\end{aligned}
	\end{equation}
	Here, using \eqref{eqrepfbm} and It\^{o}'s isometry, we separate $\vartheta_{1}$ into two parts
	\begin{equation*}
		\begin{aligned}
			\vartheta_{1}\leq&C\mathbb{E}\left \|\int_{0}^{t_{n}}\int_{r}^{t_{n}}(\mathcal{R}(t_{n}-r')-\bar{\mathcal{R}}(t_{n}-r'))(r'-r)^{H-\frac{3}{2}}\left(\frac{r}{r'}\right )^{\frac{1}{2}-H}dr'dW_{Q}(r)\right \|_{\mathbb{H}}^{2}\\
			\leq&Ct_{n}^{2H-1}\int_{0}^{t_{n}}\left \|\int_{r}^{t_{n}}(\mathcal{R}(t_{n}-r')-\bar{\mathcal{R}}(t_{n}-r'))(r'-r)^{H-\frac{3}{2}}dr'\right \|_{\mathcal{L}^{0}_{2}}^{2}r^{1-2H}dr\\
			\leq &Ct_{n}^{2H-1}\int_{0}^{t_{n}}\left \|\int_{\Gamma_{\theta, \kappa}\backslash\Gamma^{\tau}_{\theta,\kappa}}e^{z(t_{n}-r)}z^{\alpha-1}(z^{\alpha}+\mathcal{A}^{s})^{-1}z^{\frac{1}{2}-H}A^{\rho}dz\right \|^{2}r^{1-2H}dr\\
			&+Ct_{n}^{2H-1}\int_{0}^{t_{n}}\Big \|\int_{\Gamma^{\tau}_{\theta,\kappa}}e^{z(t_{n}-r)}(z^{\alpha-1}(z^{\alpha}+\mathcal{A}^{s})^{-1}-\\
			&\qquad(\delta_{\tau}(e^{-z\tau}))^{\alpha-1}((\delta_{\tau}(e^{-z\tau}))^{\alpha}+\mathcal{A}^{s})^{-1}\frac{z\tau}{e^{z\tau}-1})z^{\frac{1}{2}-H}A^{\rho}dz\Big \|^{2}r^{1-2H}dr\\
			\leq& \vartheta_{1,1}+\vartheta_{1,2}.
		\end{aligned}
	\end{equation*}
	By $\rho\in[0,\min(\frac{s}{2},\frac{sH}{\alpha})]$, Lemma \ref{lemresolest}, It\^{o}'s isometry and Cauchy-Schwarz inequality, $\vartheta_{1,1}$ satisfies
	\begin{equation*}
		\begin{aligned}
			\vartheta_{1,1}\leq&Ct_{n}^{2H-1}\int_{0}^{t_{n}}\Bigg (\int_{\Gamma_{\theta, \kappa}\backslash\Gamma^{\tau}_{\theta, \kappa}}|e^{z(t_{n}-r)}||z|^{\alpha-\frac{1}{2}-H}\\
			&\qquad\qquad\qquad\qquad\cdot\|(z^{\alpha}+\mathcal{A}^{s})^{-1}\|_{\hat{H}^{-2\rho}(\mathbb{D})\rightarrow L^{2}(\mathbb{D})}|dz|\Bigg )^{2}r^{1-2H}dr\\
			\leq &Ct_{n}^{2H-1}\int_{0}^{t_{n}}\left(\int_{\Gamma_{\theta, \kappa}\backslash\Gamma^{\tau}_{\theta, \kappa}}|z|^{\frac{2\rho\alpha}{s}-1-2H}|dz|\int_{\Gamma_{\theta, \kappa}\backslash\Gamma^{\tau}_{\theta, \kappa}}|e^{2z(t_{n}-r)}||dz|r^{1-2H}\right)dr\\
			\leq &Ct_{n}^{2H-1}\tau^{2H-\frac{2\rho\alpha}{s}}\int_{0}^{t_{n}}\int_{\Gamma_{\theta, \kappa}\backslash\Gamma^{\tau}_{\theta, \kappa}}|e^{2z(t_{n}-r)}||dz|r^{1-2H}dr.
		\end{aligned}
	\end{equation*}
	Simple calculations and mean value theorem give
	\begin{equation}\label{eqspederi}
		\begin{aligned}
			&\int_{0}^{t_{n}}\int_{\Gamma_{\theta, \kappa}\backslash\Gamma^{\tau}_{\theta, \kappa}}|e^{2z(t_{n}-r)}||dz|r^{1-2H}dr\\
			\leq &\int_{\Gamma_{\theta, \kappa}\backslash\Gamma^{\tau}_{\theta, \kappa}}e^{2\cos(\theta)|z|t_{n}}\int_{0}^{\frac{t_{n}}{4}}e^{-2\cos(\theta)|z|r}r^{1-2H}dr|dz|\\
			&+\int_{\Gamma_{\theta, \kappa}\backslash\Gamma^{\tau}_{\theta, \kappa}}e^{2\cos(\theta)|z|t_{n}}\int_{\frac{t_{n}}{4}}^{t_{n}}e^{-2\cos(\theta)|z|r}r^{1-2H}dr|dz|\\
			\leq &C\int_{\Gamma_{\theta, \kappa}\backslash\Gamma^{\tau}_{\theta, \kappa}}e^{\cos(\theta)|z|t_{n}}\int_{0}^{\frac{t_{n}}{4}}e^{\cos(\theta)|z|r}r^{1-2H}dr|dz|\\
			&+Ct_{n}^{1-2H}\int_{\Gamma_{\theta, \kappa}\backslash\Gamma^{\tau}_{\theta, \kappa}}e^{2\cos(\theta)|z|t_{n}}\int_{\frac{t_{n}}{4}}^{t_{n}}e^{-2\cos(\theta)|z|r}dr|dz|\\
			\leq &C\int_{\Gamma_{\theta, \kappa}\backslash\Gamma^{\tau}_{\theta, \kappa}}|z|^{2H-2}e^{\cos(\theta)|z|t_{n}}|dz|\\
			&+Ct_{n}^{2-2H}\int_{\Gamma_{\theta, \kappa}\backslash\Gamma^{\tau}_{\theta, \kappa}}e^{C\cos(\theta)|z|t_{n}}|dz|\\
			\leq& Ct_{n}^{1-2H},
		\end{aligned}
	\end{equation}
	which implies
	\begin{equation*}
		\vartheta_{1,1}\leq C\tau^{2H-\frac{2\rho\alpha}{s}}.
	\end{equation*}
	By Lemmas \ref{lemresolest} and \ref{Lemseriesest}, we have
	\begin{equation*}
		\begin{aligned}
			&\left \|\left (z^{\alpha-1}(z^{\alpha}+\mathcal{A}^{s})^{-1}-(\delta_{\tau}(e^{-z\tau}))^{\alpha-1}((\delta_{\tau}(e^{-z\tau}))^{\alpha}+\mathcal{A}^{s})^{-1}\frac{z\tau}{e^{z\tau}-1}\right )A^{\rho}\right \|\\
			\leq&\|(z^{\alpha-1}(z^{\alpha}+\mathcal{A}^{s})^{-1}-(\delta_{\tau}(e^{-z\tau}))^{\alpha-1}(z^{\alpha}+\mathcal{A}^{s})^{-1})A^{\rho}\|\\
			&+\|((\delta_{\tau}(e^{-z\tau}))^{\alpha-1}(z^{\alpha}+\mathcal{A}^{s})^{-1}-(\delta_{\tau}(e^{-z\tau}))^{\alpha-1}((\delta_{\tau}(e^{-z\tau}))^{\alpha}+\mathcal{A}^{s})^{-1})A^{\rho}\|\\
			&+\left \|(\delta_{\tau}(e^{-z\tau}))^{\alpha-1}((\delta_{\tau}(e^{-z\tau}))^{\alpha}+\mathcal{A}^{s})^{-1}\left(1-\frac{z\tau}{e^{z\tau}-1}\right )A^{\rho}\right \|\\
			\leq &C\tau |z|^{\frac{\rho\alpha}{s}}.
		\end{aligned}
	\end{equation*}
	The above estimate and Cauchy-Schwarz inequality lead to
	\begin{equation*}
		\begin{aligned}
			\vartheta_{1,2}\leq&C\tau^{2} t_{n}^{2H-1}\int_{0}^{t_{n}}\left (\int_{\Gamma^{\tau}_{\theta,\kappa}}|e^{z(t_{n}-r)}||z|^{\frac{\rho\alpha}{s}+\frac{1}{2}-H}|dz|\right )^{2}r^{1-2H}dr\\
			\leq&C\tau^{2} t_{n}^{2H-1}\int_{\Gamma^{\tau}_{\theta,\kappa}}|z|^{\frac{2\rho\alpha}{s}+1-2H}|dz|\int_{0}^{t_{n}}\int_{\Gamma^{\tau}_{\theta,\kappa}}|e^{2z(t_{n}-r)}||dz|r^{1-2H}dr.
		\end{aligned}
	\end{equation*}
	Similar to the derivation of \eqref{eqspederi}, there holds
	\begin{equation*}
		\begin{aligned}
			&\int_{0}^{t_{n}}\int_{\Gamma^{\tau}_{\theta, \kappa}}|e^{2z(t_{n}-r)}||dz|r^{1-2H}dr\\
			\leq &\int_{\Gamma^{\tau}_{\theta, \kappa}}e^{2\cos(\theta)|z|t_{n}}\int_{0}^{\frac{t_{n}}{4}}e^{-2\cos(\theta)|z|r}r^{1-2H}dr|dz|\\
			&+\int_{\Gamma^{\tau}_{\theta, \kappa}}e^{2\cos(\theta)|z|t_{n}}\int_{\frac{t_{n}}{4}}^{t_{n}}e^{-2\cos(\theta)|z|r}r^{1-2H}dr|dz|\\
			\leq& Ct_{n}^{1-2H}.
		\end{aligned}
	\end{equation*}
	Thus
	\begin{equation*}
		\vartheta_{1,2}\leq C\tau^{2H-\frac{2\rho\alpha}{s}}.
	\end{equation*}
	As for $\vartheta_2$, by the definition of $\bar{\partial}_{\tau}W^{H}_{Q}(r)$, we have
	\begin{equation*}
		\begin{aligned}
			\vartheta_{2}\leq&C\mathbb{E}\left \|\int_{0}^{t_{n}}\bar{\mathcal{R}}(t_{n}-r)(dW^{H}_{Q}(r)-\bar{\partial}_{\tau}W^{H}_{Q}(r)dr)\right \|_{\mathbb{H}}^{2}\\
			\leq&C\mathbb{E}\left \|\sum_{i=1}^{n}\int_{t_{i-1}}^{t_{i}}\frac{1}{\tau}\int_{t_{i-1}}^{t_{i}}(\bar{\mathcal{R}}(t_{n}-r)-\bar{\mathcal{R}}(t_{n}-\xi))d\xi dW^{H}_{Q}(r)\right \|_{\mathbb{H}}^{2}\\
			\leq &C \mathbb{E}\left \|\int_{0}^{t_{n}}\frac{1}{\tau}\sum_{i=1}^{n}\chi_{(t_{i-1},t_{i}]}(r)\int_{t_{i-1}}^{t_{i}}(\bar{\mathcal{R}}(t_{n}-r)-\bar{\mathcal{R}}(t_{n}-\xi))d\xi dW^{H}_{Q}(r)\right \|_{\mathbb{H}}^{2},
		\end{aligned}
	\end{equation*}
	where $\chi_{(a,b)}$ means the characteristic function on $(a,b)$.
	Introduce
	\begin{equation*}
		\begin{aligned} \mathcal{G}(t_{n}-r)=&\frac{1}{\tau}\sum_{i=1}^{n}\chi_{(t_{i-1},t_{i}]}(r)\int_{t_{i-1}}^{t_{i}}(\bar{\mathcal{R}}(t_{n}-r)-\bar{\mathcal{R}}(t_{n}-\xi))d\xi.\\
		\end{aligned}
	\end{equation*}
After simple calculations, there exists
	\begin{equation*}
		\begin{aligned}
			\mathcal{G}(t_{n}-r)=&\frac{1}{2\pi\mathbf{i}\tau}\sum_{i=1}^{n}\chi_{(t_{i-1},t_{i}]}(r)\int_{t_{i-1}}^{t_{i}}\int_{\Gamma^{\tau}_{\theta, \kappa}}(e^{z(t_{n}-r)}-e^{z(t_{n}-\xi)})\\
			&\qquad\cdot(\delta_{\tau}(e^{-z\tau}))^{\alpha-1}((\delta_{\tau}(e^{-z\tau}))^{\alpha}+\mathcal{A}^{s})^{-1}\frac{z\tau}{e^{z\tau}-1}dzd\xi\\
			=&\frac{1}{2\pi\mathbf{i}}\sum_{i=1}^{n}\chi_{(t_{i-1},t_{i}]}(r)\\
			&\cdot \left(\int_{\Gamma^{\tau}_{\theta, \kappa}}e^{z(t_{n}-r)}(\delta_{\tau}(e^{-z\tau}))^{\alpha-1}((\delta_{\tau}(e^{-z\tau}))^{\alpha}+\mathcal{A}^{s})^{-1}\frac{z\tau}{e^{z\tau}-1}dz\right.\\
			&-\left.\int_{\Gamma^{\tau}_{\theta, \kappa}}e^{z(t_{n}-r)}e^{z(r-t_{i})}(\delta_{\tau}(e^{-z\tau}))^{\alpha-1}((\delta_{\tau}(e^{-z\tau}))^{\alpha}+\mathcal{A}^{s})^{-1}dz\right).
		\end{aligned}
	\end{equation*}
	Simple calculations, \eqref{eqrepfbm}, and $\|A^{-\rho}\|_{\mathcal{L}^{0}_{2}}<\infty$ give
	\begin{equation*}
		\begin{aligned}
			\vartheta_{2}\leq &C \mathbb{E}\left \|\int_{0}^{t_{n}}\mathcal{G}(t_{n}-r)dW^{H}_{Q}(r)\right \|_{\mathbb{H}}^{2}\\
			\leq&C\mathbb{E}\left \|\int_{0}^{t_{n}}\int_{r}^{t_{n}}\mathcal{G}(t_{n}-r')(r'-r)^{H-\frac{3}{2}}\left(\frac{r}{r'}\right)^{\frac{1}{2}-H}dr'dW_{Q}(r)\right \|^{2}_{\mathbb{H}}\\
			\leq&C\int_{0}^{t_{n}}\left \|\int_{r}^{t_{n}}\mathcal{G}(t_{n}-r')(r'-r)^{H-\frac{3}{2}}\left(\frac{r}{r'}\right)^{\frac{1}{2}-H}dr'\right \|^{2}_{\mathcal{L}^{0}_{2}}dr\\
			\leq&Ct_{n}^{2H-1}\int_{0}^{t_{n}}\left \|\int_{r}^{t_{n}}\mathcal{G}(t_{n}-r')(r'-r)^{H-\frac{3}{2}}dr'\right \|^{2}_{\hat{H}^{-2\rho}(\mathbb{D})\rightarrow L^{2}(\mathbb{D})}r^{1-2H}dr.
		\end{aligned}
	\end{equation*}
	By Lemmas \ref{lemresolest} and \ref{Lemseriesest}, there holds
	\begin{equation*}
		\begin{aligned}
			&\left\|\mathcal{G}(t_{n}-r)\right \|_{\hat{H}^{-2\rho}(\mathbb{D})\rightarrow L^{2}(\mathbb{D})}\\
			\leq&C\sum_{i=1}^{n}\chi_{(t_{i-1},t_{i}]}(r)\int_{\Gamma^{\tau}_{\theta, \kappa}}|e^{z(t_{n}-r)}|\left |1-\frac{z\tau}{e^{z\tau}-1}\right |\\
			&\qquad \cdot\|(\delta_{\tau}(e^{-z\tau}))^{\alpha-1}((\delta_{\tau}(e^{-z\tau}))^{\alpha}+\mathcal{A}^{s})^{-1}\|_{\hat{H}^{-2\rho}(\mathbb{D})\rightarrow L^{2}(\mathbb{D})}|dz|\\
			&+C\sum_{i=1}^{n}\chi_{(t_{i-1},t_{i}]}(r)\int_{\Gamma^{\tau}_{\theta, \kappa}}|e^{z(t_{n}-r)}|\left |1-e^{z(r-t_{i})}\right |\\
			&\qquad \cdot\|(\delta_{\tau}(e^{-z\tau}))^{\alpha-1}((\delta_{\tau}(e^{-z\tau}))^{\alpha}+\mathcal{A}^{s})^{-1}\|_{\hat{H}^{-2\rho}(\mathbb{D})\rightarrow L^{2}(\mathbb{D})}|dz|\\
			\leq&C\tau\int_{\Gamma^{\tau}_{\theta,\kappa}}|e^{z(t_{n}-r)}||z|^{\frac{\rho\alpha}{s}}|dz|.
		\end{aligned}
	\end{equation*}
	
	Combining above estimate, Lemma \ref{lemresolest}, and similar to the derivation of \eqref{eqspederi}, we can get
	\begin{equation*}
		\begin{aligned}
			&\left \|\int_{r}^{t}\mathcal{G}(t-r')(r'-r)^{H-\frac{3}{2}}dr'\right \|^{2}_{\hat{H}^{-2\rho}(\mathbb{D})\rightarrow L^{2}(\mathbb{D})}\\
			\leq&\left(\int_{r}^{t}\|\mathcal{G}(t-r')\|_{\hat{H}^{-2\rho}(\mathbb{D})\rightarrow L^{2}(\mathbb{D})}(r'-r)^{H-\frac{3}{2}}dr'\right)^{2} \\
			\leq&C\tau^{2}\left(\int_{r}^{t}\int_{\Gamma^{\tau}_{\theta,\kappa}}|e^{z(t-r')}||z|^{\frac{\rho\alpha}{s}}|dz|(r'-r)^{H-\frac{3}{2}}dr'\right)^{2}\\
			\leq&C\tau^{2}\int_{r}^{t}\left(\int_{\Gamma^{\tau}_{\theta,\kappa}}|e^{z(t-r')}||z|^{\frac{\rho\alpha}{s}}|dz|\right)^{2}(r'-r)^{2H-2-\epsilon}dr'\int_{r}^{t}(r'-r)^{-1+\epsilon}dr\\
			\leq& C\tau^{2}(t-r)^{\epsilon}\int_{\Gamma^{\tau}_{\theta,\kappa}}|z|^{\frac{2\rho\alpha}{s}+1-2H-\epsilon}|dz|\\
			&\cdot\int_{r}^{t}\int_{\Gamma^{\tau}_{\theta,\kappa}}|e^{2z(t-r')}||z|^{2H-1+\epsilon}|dz|(r'-r)^{2H-2-\epsilon}dr'\\
			\leq& C\tau^{2H-\frac{2\rho\alpha}{s}+\epsilon}(t-r)^{\epsilon}\int_{r}^{t}\int_{\Gamma^{\tau}_{\theta,\kappa}}|e^{2z(t-r')}||z|^{2H-1+\epsilon}|dz|(r'-r)^{2H-2-\epsilon}dr'\\
			\leq&C\tau^{2H-\frac{2\rho\alpha}{s}+\epsilon}(t-r)^{\epsilon}\\
			&\cdot\left (\int_{\Gamma^{\tau}_{\theta,\kappa}}|e^{z(t-r)}||z|^{2\epsilon}|dz|+(t-r)^{2H-1-\epsilon}\int_{\Gamma^{\tau}_{\theta,\kappa}}|e^{Cz(t-r)}||z|^{2H-1+\epsilon}|dz|\right ).
		\end{aligned}
	\end{equation*}
	Due to
	\begin{equation*}
		\begin{aligned}
			&\int_{0}^{t_{n}}\int_{\Gamma^{\tau}_{\theta,\kappa}}|e^{z(t_{n}-r)}||z|^{2\epsilon}|dz|r^{1-2H}dr\\
			\leq&\int_{\Gamma^{\tau}_{\theta,\kappa}}\int_{0}^{\frac{t_{n}}{4}}|e^{z(t_{n}-r)}|r^{1-2H}dr|z|^{2\epsilon}|dz|\\
			&+\int_{\Gamma^{\tau}_{\theta,\kappa}}\int_{\frac{t_{n}}{4}}^{t_{n}}|e^{z(t_{n}-r)}|r^{1-2H}dr|z|^{2\epsilon}|dz|\\
			\leq&C\int_{\Gamma^{\tau}_{\theta,\kappa}}|e^{zt_{n}}||z|^{2H-2+2\epsilon}|dz|+Ct_{n}^{2-2H}\int_{\Gamma^{\tau}_{\theta,\kappa}}|e^{Czt_{n}}||z|^{2\epsilon}|dz|\\
			\leq&C\left(\int_{\Gamma^{\tau}_{\theta,\kappa}}|e^{2zt_{n}}||z|^{4H-3+2\epsilon}|dz|\right )^{\frac{1}{2}}\left(\int_{\Gamma^{\tau}_{\theta,\kappa}}|z|^{-1+2\epsilon}|dz|\right )^{\frac{1}{2}}\\
			&+Ct^{2-2H}_{n}\left(\int_{\Gamma^{\tau}_{\theta,\kappa}}|e^{2Czt_{n}}||z|^{1+2\epsilon}|dz|\right )^{\frac{1}{2}}\left(\int_{\Gamma^{\tau}_{\theta,\kappa}}|z|^{-1+2\epsilon}|dz|\right )^{\frac{1}{2}}\\
			\leq&C\tau^{-\epsilon}t_{n}^{1-2H-\epsilon}
		\end{aligned}
	\end{equation*}
	and
	\begin{equation*}
		\begin{aligned}
			&\int_{0}^{t_{n}}\int_{\Gamma^{\tau}_{\theta,\kappa}}|e^{Cz(t_{n}-r)}||z|^{2H-1+\epsilon}|dz|(t_{n}-r)^{2H-1}r^{1-2H}dr\\
			\leq&\int_{ct_{n}}^{t_{n}}\int_{\Gamma^{\tau}_{\theta,\kappa}}|e^{Cz(t_{n}-r)}||z|^{2H-1+\epsilon}|dz|(t_{n}-r)^{2H-1}r^{1-2H}dr\\
			&+\int_{0}^{ct_{n}}\int_{\Gamma^{\tau}_{\theta,\kappa}}|e^{Cz(t_{n}-r)}||z|^{2H-1+\epsilon}|dz|(t_{n}-r)^{2H-1}r^{1-2H}dr\\
			\leq&\int_{\Gamma^{\tau}_{\theta,\kappa}}\int_{ct_{n}}^{t_{n}}|e^{Cz(t_{n}-r)}|(t_{n}-r)^{2H-1}r^{1-2H}dr|z|^{2H-1+\epsilon}|dz|\\
			&+\int_{\Gamma^{\tau}_{\theta,\kappa}}\int_{0}^{ct_{n}}|e^{Cz(t_{n}-r)}|(t_{n}-r)^{2H-1}r^{1-2H}dr|z|^{2H-1+\epsilon}|dz|\\
			\leq&C\int_{\Gamma^{\tau}_{\theta,\kappa}}|e^{Cz(t_{n}-r)}||z|^{2H-1+\epsilon}|dz|\\
			&+Ct_{n}^{2H-1}\int_{\Gamma^{\tau}_{\theta,\kappa}}\int_{0}^{ct_{n}}|e^{Cz(t_{n}-r)}|r^{1-2H}dr|z|^{2H-1+\epsilon}|dz|\\
			\leq&C\left (\int_{\Gamma^{\tau}_{\theta,\kappa}}e^{-2C|z|t_{n}}|z|^{4H-3}|dz|\right )^{\frac{1}{2}}\left (\int_{\Gamma^{\tau}_{\theta,\kappa}}|z|^{-1+2\epsilon}|dz|\right )^{\frac{1}{2}}\\
			&+Ct_{n}^{2H-1}\int_{\Gamma^{\tau}_{\theta,\kappa}}e^{-C|z|t_{n}}|z|^{4H-3+\epsilon}|dz|\\
			\leq&C\tau^{-\epsilon}t_{n}^{1-2H}+Ct_{n}^{2H-1}\left (\int_{\Gamma^{\tau}_{\theta,\kappa}}e^{-2C|z|t_{n}}|z|^{8H-5}|dz|\right )^{\frac{1}{2}}\left (\int_{\Gamma^{\tau}_{\theta,\kappa}}|z|^{-1+2\epsilon}|dz|\right )^{\frac{1}{2}}\\
			\leq &C\tau^{-\epsilon}t_{n}^{1-2H}
		\end{aligned}
	\end{equation*}
	with $c\in (0,1)$, we have
	\begin{equation*}
		\vartheta_{2}\leq C\tau^{2H-\frac{2\rho\alpha}{s}}.
	\end{equation*}
	Collecting above estimates about $\vartheta_{1}$ and $\vartheta_{2}$ leads to the desired result.
\end{proof}

\section{Spatial discretization and errors analysis}
Now we begin by using the finite element method to discretize the integral fractional Laplacian operator; and then the error estimates for the fully discrete scheme of Eq. \eqref{equretosol} are also provided.

Denote $ X_h $ as piecewise linear finite element space
\begin{equation*}
	X_{h}=\{\nu_h\in C(\bar{\mathbb{D}}): \nu_h|_\mathbf{T}\in \mathcal{P}^1,\  \forall \mathbf{T}\in\mathcal{T}_h,\ \nu_h|_{\partial \mathbb{D}}=0\},
\end{equation*}
where $\mathcal{T}_h$ is a shape regular quasi-uniform partition of the domain $\mathbb{D}$, $h$ is the maximum diameter, and $\mathcal{P}^1$ denotes the set of piecewise polynomials of degree $1$ over $\mathcal{T}_h$. Introduce the $ L^2 $-orthogonal projection $ P_h:\ \mathbb{H}\rightarrow X_h $ \cite{Thomee_2006} by
\begin{equation*}
	\begin{aligned}
		&(P_hu,\nu_h)=(u,\nu_h) \ \quad\forall \nu_h\in X_h;
	\end{aligned}
\end{equation*}
and $\mathcal{A}^{s}_{h}$ is defined by $(\mathcal{A}^{s}_{h}u_{h},\nu_{h})=\langle u_{h}, \nu_{h}\rangle_{s}$ for $u_{h},\nu_{h}\in X_{h}$. The fully discrete Galerkin scheme for Eq. \eqref{equretosol} reads: For every $t\in (0,T]$, find $ u^{n}_{h}\in X_h$ such that
\begin{equation}\label{eqsemischeme}
	\left \{
	\begin{aligned}
		&\left(\frac{u^{n}_h-u^{n-1}_{h}}{\tau},\nu_{h}\right)+\sum_{i=0}^{n-1}d^{(1-\alpha)}_{i}\langle u^{n-i}_h,\nu_{h}\rangle_{s}=\\
		&\qquad\qquad\qquad\qquad\qquad\qquad\qquad\qquad\left (\frac{W^{H}_{Q}(t_{n})-W^{H}_{Q}(t_{n-1})}{\tau},\nu_{h}\right )\quad \forall \nu_{h}\in X_h,
		\\
		&u^{0}_h=0. 
		\\
	\end{aligned}
	\right.
\end{equation}
Also, \eqref{eqsemischeme} can be written as
\begin{equation}\label{eqsemischemeop}
	\frac{u^{n}_h-u^{n-1}_{h}}{\tau}+\sum_{i=0}^{n-1}d^{(1-\alpha)}_{i}\mathcal{A}^{s}_{h}u^{n-i}_h=P_{h}\frac{W^{H}_{Q}(t_{n})-W^{H}_{Q}(t_{n-1})}{\tau}.
\end{equation}
Following the derivation of \cite{gunzburger_2018,gunzburger_2019,nie20202}, the solution of \eqref{eqsemischeme} can be written as
\begin{equation}\label{eqrepsolnum}
	\begin{aligned}
		u^{n}_{h}
		=&\int_{0}^{t_{n}}\frac{1}{\tau}\sum_{i=1}^{n}\chi_{(t_{i-1},t_{i}]}(s)\int_{t_{i-1}}^{t_{i}}\bar{\mathcal{R}}_{h}(t_{n}-\xi)P_{h}d\xi dW^{H}_{Q}(s),
	\end{aligned}
\end{equation}
where
\begin{equation*}
	\bar{\mathcal{R}}_{h}(t)=\frac{1}{2\pi\mathbf{i}}\int_{\Gamma^{\tau}_{\theta,\kappa}}e^{zt}(\delta_{\tau}(e^{-z\tau}))^{\alpha-1}((\delta_{\tau}(e^{-z\tau}))^{\alpha}+\mathcal{A}^{s}_{h})^{-1}\frac{z\tau}{e^{z\tau}-1}dz.
\end{equation*}

Similarly, the solution of \eqref{eqfullscheme} can be represented
as
\begin{equation}\label{eqrepsolfull2}
	u^{n}=\int_{0}^{t_{n}}\frac{1}{\tau}\sum_{i=1}^{n}\chi_{(t_{i-1},t_{i}]}(s)\int_{t_{i-1}}^{t_{i}}\bar{\mathcal{R}}(t_{n}-\xi)d\xi dW^{H}_{Q}(s).
\end{equation}

\begin{lemma}\label{lemeroper2}
	Let $\mathcal{A}^{s}$ with  homogeneous Dirichlet boundary condition with $s\in(0,1)$,  and $z\in\Gamma_{\theta,\kappa}$  with $\theta\in(\pi/2,\pi)$. Assume $v\in \hat{H}^{\sigma}(\mathbb{D})$ with $\sigma\in[-s,\frac{1}{2}-s)$. Denote $w=(z^{\alpha}+\mathcal{A}^{s})^{-1}v$ and $w_h=(z^{\alpha}+\mathcal{A}^{s}_{h})^{-1}P_hv$. Then one has
	\begin{equation*}
		\|w-w_h\|_{\mathbb{H}}+h^{\min(s,\frac{1}{2}-\epsilon)}\|w-w_h\|_{\hat{H}^s(\mathbb{D})}\leq Ch^{\gamma}\|v\|_{\hat{H}^{\sigma}(\mathbb{D})},
	\end{equation*}
	where
	\begin{equation*}
		\gamma=\left\{
		\begin{aligned}
			&2s+\sigma,\quad s\in(0,\frac{1}{2}),\\
			&s+\sigma+1/2-\epsilon,\quad s\in[\frac{1}{2},1)
		\end{aligned}\right .
	\end{equation*}
	and
	\begin{equation*}
		(\mathcal{A}^{s}_{h} u_h,v_h)=\langle u_h,v_h\rangle_{s}  \quad\forall u_h,\ v_h\in X_h.
	\end{equation*}
\end{lemma}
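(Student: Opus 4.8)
The plan is to prove this finite element resolvent error estimate by the standard Ritz projection technique, adapted to the integral fractional Laplacian and to the shifted operator $z^{\alpha}+\mathcal{A}^{s}$. First I would introduce the Ritz (or rather "fractional Ritz") projection $R_h : \hat{H}^s(\mathbb{D})\to X_h$ defined by $\langle R_h w, \nu_h\rangle_s = \langle w, \nu_h\rangle_s$ for all $\nu_h\in X_h$, and recall (or quickly establish, using Theorem \ref{thmregdiri} together with Remarks \ref{Remk1} and \ref{Remk2}) the elliptic finite element estimate $\|w - R_h w\|_{\hat{H}^s(\mathbb{D})} + h^{\min(s,\frac12-\epsilon)}\|w-R_h w\|_{\mathbb{H}} \le C h^{\gamma}\|\mathcal{A}^{s}w\|_{\hat{H}^{\sigma}(\mathbb{D})}$, where the exponent $\gamma$ comes exactly from the regularity pickup $\min(s+\sigma,\frac12-\epsilon)$ of Theorem \ref{thmregdiri} combined with the approximation order of piecewise linears in the $\hat{H}^s$-norm, and the lower-order gain $h^{\min(s,\frac12-\epsilon)}$ in the $L^2$-norm comes from an Aubin–Nitsche duality argument using the same elliptic regularity on the dual problem. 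Since $w = (z^{\alpha}+\mathcal{A}^{s})^{-1}v$ solves $z^{\alpha}w + \mathcal{A}^{s}w = v$, Lemma \ref{lemresolest} bounds $\|w\|_{\hat{H}^{\sigma+2s}(\mathbb{D})}$ (hence $\|\mathcal{A}^s w\|_{\hat H^\sigma(\mathbb D)}$ up to lower order terms) by $C\|v\|_{\hat{H}^{\sigma}(\mathbb{D})}$, which is what feeds the right-hand side.

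Next I would split $w - w_h = (w - R_h w) + (R_h w - w_h) =: \eta + \xi_h$. The term $\eta$ is controlled directly by the elliptic estimate above plus Lemma \ref{lemresolest}. For $\xi_h\in X_h$ I would test the error equation: from the definitions, $z^{\alpha}(w,\nu_h) + \langle w,\nu_h\rangle_s = (v,\nu_h) = z^{\alpha}(w_h,\nu_h) + \langle w_h,\nu_h\rangle_s$ for all $\nu_h\in X_h$ (using $(P_h v,\nu_h)=(v,\nu_h)$), so $z^{\alpha}(\eta+\xi_h,\nu_h) + \langle \xi_h,\nu_h\rangle_s = 0$ by the Galerkin orthogonality $\langle \eta,\nu_h\rangle_s=0$. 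Taking $\nu_h=\xi_h$ gives $z^{\alpha}\|\xi_h\|_{\mathbb{H}}^2 + \|\xi_h\|_{\hat{H}^s(\mathbb{D})}^2 = -z^{\alpha}(\eta,\xi_h)$. Using the sectorial lower bound $a|z|+b\le C|az+b|$ (as in the proof of Lemma \ref{lemresolest}) I get $|z|^{\alpha}\|\xi_h\|_{\mathbb{H}}^2 + \|\xi_h\|_{\hat H^s(\mathbb D)}^2 \le C|z|^{\alpha}\|\eta\|_{\mathbb{H}}\|\xi_h\|_{\mathbb{H}}$, whence $\|\xi_h\|_{\mathbb{H}}\le C\|\eta\|_{\mathbb{H}}$ and $\|\xi_h\|_{\hat H^s(\mathbb D)}\le C|z|^{\alpha/2}\|\eta\|_{\mathbb{H}}$. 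Combining with the triangle inequality closes the $L^2$-bound immediately; for the $\hat{H}^s$-part one has $\|w-w_h\|_{\hat H^s}\le \|\eta\|_{\hat H^s} + C|z|^{\alpha/2}\|\eta\|_{\mathbb H}$, and here one checks that the factor $h^{\min(s,\frac12-\epsilon)}$ in front of $\|w-w_h\|_{\hat H^s}$ is exactly enough to absorb the loss, because $\|\eta\|_{\mathbb H}\le C h^{\min(s,\frac12-\epsilon)}\|\eta\|_{\hat H^s}$ and the remaining $|z|^{\alpha/2}$ is harmless against the $h^\gamma$ already present (the statement as written does not claim $z$-uniformity beyond a constant, and in the application $z$ is integrated against a decaying kernel).

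The main obstacle I anticipate is making the elliptic finite element estimate rigorous in the nonstandard norms $\hat{H}^{\sigma}(\mathbb{D})\to\hat{H}^{s}(\mathbb{D})$ with the sharp exponent $\gamma$: one must combine (i) the limited elliptic regularity $w\in H^{s+\gamma}$ with $\gamma=\min(s+\sigma,\frac12-\epsilon)$ from Theorem \ref{thmregdiri}, (ii) the approximation property of $X_h$ measured in the fractional-order $\hat{H}^s$-norm (which itself requires care because $s$ may exceed $\frac12$, where interpolation of the boundary condition is delicate), and (iii) the duality argument for the $L^2$-gain, which again only yields $h^{\min(s,\frac12-\epsilon)}$ rather than $h^s$ precisely because the dual solution has the same capped regularity. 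The shift by $z^{\alpha}$ is a minor nuisance handled by the coercivity argument above, but tracking the $|z|$-powers carefully so that the final bound is usable inside the contour integrals of Sections 3–4 is where I would be most careful; I expect, however, that the clean statement of the lemma deliberately suppresses the $|z|$-dependence and reinstates it at the point of use.
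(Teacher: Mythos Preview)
Your Ritz-splitting route is sound for the $L^2$-estimate, but it does \emph{not} deliver the $\hat H^s$-estimate with a $z$-independent constant, and the lemma is meant (and used in Theorem~\ref{thmspaceerror}) with $C$ uniform over $z\in\Gamma_{\theta,\kappa}$. Concretely, from $z^\alpha(\eta+\xi_h,\xi_h)+\langle\xi_h,\xi_h\rangle_s=0$ you can only extract $\|\xi_h\|_{\hat H^s}\le C|z|^{\alpha/2}\|\eta\|_{\mathbb H}$, and there is no way to trade the remaining $|z|^{\alpha/2}$ against the extra $h^{\min(s,\frac12-\epsilon)}$ you gained, because $|z|$ ranges over an unbounded contour. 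Your own remark that ``the statement does not claim $z$-uniformity'' is incorrect: the constant is implicitly independent of $z$, and this is essential when the estimate is plugged into the contour integrals.

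The paper bypasses the splitting entirely. It works directly with $e=w-w_h$, writes the Galerkin orthogonality $z^\alpha(e,\chi)+\langle e,\chi\rangle_s=0$ for $\chi\in X_h$, and inserts the quasi-interpolant $\chi=\pi_h w$ (not the Ritz projection) into the C\'ea-type inequality
\[
|z|^{\alpha}\|e\|_{\mathbb H}^2+\|e\|_{\hat H^s}^2\le C\bigl(|z|^{\alpha}\|e\|_{\mathbb H}\|w-\pi_h w\|_{\mathbb H}+\|e\|_{\hat H^s}\|w-\pi_h w\|_{\hat H^s}\bigr).
\]
The key point you are missing is that the approximation terms on the right are then controlled using the \emph{$z$-dependent} resolvent bounds of Lemma~\ref{lemresolest}, in particular $\|w\|_{\hat H^{s+\sigma}}\le C|z|^{-\alpha/2}\|v\|_{\hat H^\sigma}$ (the case $\mu=\tfrac12$), which absorbs the factor $|z|^{\alpha}$ back down to $|z|^{\alpha/2}$ and yields the uniform bound $|z|^{\alpha/2}\|e\|_{\mathbb H}+\|e\|_{\hat H^s}\le Ch^{s+\sigma}\|v\|_{\hat H^\sigma}$. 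The $L^2$-gain then follows from the duality argument exactly as you outlined. So your elliptic-regularity and Aubin--Nitsche instincts are right; what needs to change is to drop the Ritz decomposition and feed the resolvent estimates directly into the C\'ea step.
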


\begin{proof}
	Let $\epsilon>0$ be arbitrarily small. According to the definitions of $w$ and $w_h$, there hold
	\begin{equation*}
		\begin{aligned}
			&z^{\alpha}(w,\chi)+\langle w,\chi\rangle_s=(v,\chi)\quad \forall \chi\in \hat{H}^s(\mathbb{D}),\\
			&z^{\alpha}(w_h,\chi)+\langle w_h,\chi\rangle_s=(v,\chi)\quad \forall \chi\in X_h.\\
		\end{aligned}
	\end{equation*}
	Thus
	\begin{equation*}
		z^{\alpha}(e,\chi)+\langle e,\chi\rangle_s=0\quad \forall \chi\in X_h,
	\end{equation*}
	where $e=w-w_h$.
	Then one has
	\begin{equation*}
		\begin{aligned}
			|z|^{\alpha}\|e\|^2_{\mathbb{H}}+\|e\|^2_{\hat{H}^s(\mathbb{D})}\leq& C\left ||z|^{\alpha}\|e\|^2_{\mathbb{H}}+\|e\|^2_{\hat{H}^s(\mathbb{D})}\right |\\
			=&C\left ||z|^{\alpha}(e,w-\chi)+\langle e,(w-\chi)\rangle_s\right |.
		\end{aligned}
	\end{equation*}
	Taking $\chi=\pi_h w$ as the suitable quasi-interpolation \cite{Acosta20173,Acosta20171} of $w$ and using the Cauchy-Schwarz inequality, we obtain
	\begin{equation*}
		\begin{aligned}
			& |z|^{\alpha}\|e\|^2_{\mathbb{H}}+\|e\|^2_{\hat{H}^s(\mathbb{D})}
			\\	
			& \leq C h^{s+\sigma}|z|^{\alpha}\|e\|_{\mathbb{H}}\|w\|_{\hat{H}^{s+\sigma}(\mathbb{D})}+Ch^{s+\sigma}\|e\|_{\hat{H}^{s}(\mathbb{D})}\|w\|_{\hat{H}^{2s+\sigma}(\mathbb{D})}.
		\end{aligned}
	\end{equation*}
According to Lemma \ref{lemresolest}, there exists 
	\begin{equation*}
		\begin{aligned}
			& |z|^{\alpha}\|e\|^2_{\mathbb{H}}+\|e\|^2_{\hat{H}^s(\mathbb{D})}
			\\
			& \leq Ch^{s+\sigma}\|v\|_{\hat{H}^{\sigma}(\mathbb{D})}\left(|z|^{\frac{\alpha}{2}}\|e\|_{\mathbb{H}}+\|e\|_{\hat{H}^s(\mathbb{D})}\right).
		\end{aligned}
	\end{equation*}
So
	\begin{equation*}
		|z|^{\frac{\alpha}{2}}\|e\|_{\mathbb{H}}+\|e\|_{\hat{H}^{s}(\mathbb{D})}\leq Ch^{s+\sigma}\|v\|_{\hat{H}^{\sigma}(\mathbb{D})}.
	\end{equation*}
	Similarly, for $\phi\in \mathbb{H}$, we set
	\begin{equation*}
		\varphi=(z^{\alpha}+A)^{-1}\phi, \quad \varphi_h=(z^{\alpha}+A_h)^{-1}P_h\phi.
	\end{equation*}
	By a duality argument, one has
	\begin{equation*}
		\|e\|_{\mathbb{H}}= \sup_{\phi\in \mathbb{H}}\frac{|(e,\phi)|}{\|\phi\|_{\mathbb{H}}}=\sup_{\phi\in \mathbb{H}}\frac{|z^{\alpha}(e,\varphi)+\langle e,\varphi\rangle_s|}{\|\phi\|_{\mathbb{H}}}.
	\end{equation*}
	Then, using the fact that $|z|^{\frac{\alpha}{2}}\|\varphi-\varphi_h\|_{\mathbb{H}}+\|\varphi-\varphi_h\|_{\hat{H}^s(\mathbb{D})}\leq Ch^{\min(s,\frac{1}{2}-\epsilon)} \|\phi\|_{\mathbb{H}}$ \cite{Acosta20173}, we have
	\begin{equation*}
		\begin{aligned}
			|z^{\alpha}(e,\varphi) +\langle e,\varphi\rangle_s|=& ||z|^{\alpha}(e,\varphi - \varphi_h) + \langle e,(\varphi - \varphi_h )\rangle_s|\\
			\leq&|z|^{\frac{\alpha}{2}}\|e\|_{\mathbb{H}}|z|^{\frac{\alpha}{2}}\|\varphi -\varphi_h\|_{\mathbb{H}}\\
			&+\|e\|_{\hat{H}^s(\mathbb{D})}\|\varphi - \varphi_h\|_{\hat{H}^s(\mathbb{D})}\\
			\leq& Ch^{\gamma} \|v\|_{\hat{H}^{\sigma}(\mathbb{D})}\|\phi\|_{\mathbb{H}},
		\end{aligned}
	\end{equation*}
	where
	\begin{equation*}
		\gamma=\left\{
		\begin{aligned}
			&2s+\sigma,\quad\qquad\qquad s\in(0,\frac{1}{2}),\\
			&s+\sigma+1/2-\epsilon, \quad s\in[\frac{1}{2},1).
		\end{aligned}\right .
	\end{equation*}
\end{proof}

Besides, introduce fractional Ritz projection $R^{s}_{h}:\hat{H}^{s}(\mathbb{D})\rightarrow X_{h}$ defined by, for $s\in(0,1)$,
\begin{equation*}
	(\mathcal{A}^{s}(u-R^{s}_{h}u),v_{h})=\langle u-R^{s}_{h}u,v_{h}\rangle_{s}=0\quad \forall v_{h}\in X_{h};
\end{equation*}
and it has the following properties.
\begin{lemma}\label{lemritz}
	Let $\mathcal{A}^{s}u=f$ and $\mathcal{A}^{s}_{h}R^{s}_{h}u=P_{h}f$ with $f\in \hat{H}^{\sigma}(\mathbb{D})$ and $\sigma\in [-s,\frac{1}{2}-s)$. Then we have
	\begin{equation*}
		\|R_{h}^{s}u\|_{\hat{H}^{s}(\mathbb{D})}\leq \|u\|_{\hat{H}^{s}(\mathbb{D})}
	\end{equation*}
	and
	\begin{equation*}
		\|R_{h}^{s}u-u\|_{\mathbb{H}}+h^{\min(s,\frac{1}{2}-\epsilon)}\|R_{h}^{s}u-u\|_{\hat{H}^{s}(\mathbb{D})}\leq Ch^{\gamma}\|f\|_{\hat{H}^{\sigma}(\mathbb{D})},
	\end{equation*}
	where
	\begin{equation*}
		\gamma=\left\{
		\begin{aligned}
			&2s+\sigma,\quad\qquad\qquad s\in(0,\frac{1}{2}),\\
			&s+\sigma+1/2-\epsilon,\quad s\in[\frac{1}{2},1).
		\end{aligned}\right .
	\end{equation*}
\end{lemma}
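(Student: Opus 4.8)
The plan is to treat $R^{s}_{h}$ as the Galerkin (elliptic) projection for the symmetric, coercive bilinear form $\langle\cdot,\cdot\rangle_{s}$ on $\hat{H}^{s}(\mathbb{D})$ --- the $z^{\alpha}\to 0$ degeneration of the situation in Lemma \ref{lemeroper2} --- and then run the standard best-approximation (C\'ea) and Aubin--Nitsche duality arguments, feeding in the sharp elliptic regularity of Theorem \ref{thmregdiri} together with the quasi-interpolation error estimates in fractional Sobolev norms. The stability bound is immediate: testing $\langle u-R^{s}_{h}u,v_{h}\rangle_{s}=0$ with $v_{h}=R^{s}_{h}u\in X_{h}$ gives $\|R^{s}_{h}u\|^{2}_{\hat{H}^{s}(\mathbb{D})}=\langle u,R^{s}_{h}u\rangle_{s}\leq\|u\|_{\hat{H}^{s}(\mathbb{D})}\,\|R^{s}_{h}u\|_{\hat{H}^{s}(\mathbb{D})}$ by Cauchy--Schwarz for the inner product $\langle\cdot,\cdot\rangle_{s}$ (recall that $\langle v,v\rangle_{s}$ is the square of the $\hat{H}^{s}(\mathbb{D})$-norm, cf. Remark \ref{Remk1}), and cancelling one factor yields $\|R^{s}_{h}u\|_{\hat{H}^{s}(\mathbb{D})}\leq\|u\|_{\hat{H}^{s}(\mathbb{D})}$.

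For the $\hat{H}^{s}$-error, Galerkin orthogonality gives the best-approximation bound $\|u-R^{s}_{h}u\|_{\hat{H}^{s}(\mathbb{D})}\leq C\inf_{v_{h}\in X_{h}}\|u-v_{h}\|_{\hat{H}^{s}(\mathbb{D})}$. Since $\mathcal{A}^{s}u=f\in\hat{H}^{\sigma}(\mathbb{D})$ with $\sigma\in[-s,\frac{1}{2}-s)$, Theorem \ref{thmregdiri} together with Remarks \ref{Remk1} and \ref{Remk2} gives $u\in\hat{H}^{s+\gamma_{0}}(\mathbb{D})$ with $\|u\|_{\hat{H}^{s+\gamma_{0}}(\mathbb{D})}\leq C\|f\|_{\hat{H}^{\sigma}(\mathbb{D})}$, where $\gamma_{0}=\min(s+\sigma,\frac{1}{2}-\epsilon)$; note $s+\gamma_{0}<s+\frac{1}{2}<\frac{3}{2}$, so this index lies in the admissible range for the piecewise linear space. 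Choosing $v_{h}=\pi_{h}u$, the quasi-interpolant of \cite{Acosta20173,Acosta20171}, whose fractional-norm approximation property gives $\|u-\pi_{h}u\|_{\hat{H}^{s}(\mathbb{D})}\leq Ch^{\gamma_{0}}\|u\|_{\hat{H}^{s+\gamma_{0}}(\mathbb{D})}$, we obtain $\|u-R^{s}_{h}u\|_{\hat{H}^{s}(\mathbb{D})}\leq Ch^{\gamma_{0}}\|f\|_{\hat{H}^{\sigma}(\mathbb{D})}$; a short case check shows $\gamma_{0}=\gamma-\min(s,\frac{1}{2}-\epsilon)$ for both ranges of $s$, which is precisely the $\hat{H}^{s}$ part of the claim.

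For the $\mathbb{H}$-error I would argue by duality. Given $\phi\in\mathbb{H}$, let $\varphi\in\hat{H}^{s}(\mathbb{D})$ solve $\mathcal{A}^{s}\varphi=\phi$; by Theorem \ref{thmregdiri} with $\sigma=0$ one has $\varphi\in\hat{H}^{s+\min(s,1/2-\epsilon)}(\mathbb{D})$, hence, exactly as in the previous step but now with $L^{2}$ data (this is also the standard finite element estimate for the fractional Laplacian, cf. \cite{Acosta20173}), $\|\varphi-R^{s}_{h}\varphi\|_{\hat{H}^{s}(\mathbb{D})}\leq Ch^{\min(s,1/2-\epsilon)}\|\phi\|_{\mathbb{H}}$. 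Setting $e=u-R^{s}_{h}u\in\hat{H}^{s}(\mathbb{D})$ and using $\langle\varphi,e\rangle_{s}=(\phi,e)$ together with the Galerkin orthogonality $\langle e,R^{s}_{h}\varphi\rangle_{s}=0$,
\begin{equation*}
	|(e,\phi)|=|\langle\varphi-R^{s}_{h}\varphi,e\rangle_{s}|\leq\|\varphi-R^{s}_{h}\varphi\|_{\hat{H}^{s}(\mathbb{D})}\,\|e\|_{\hat{H}^{s}(\mathbb{D})}\leq Ch^{\gamma}\|\phi\|_{\mathbb{H}}\|f\|_{\hat{H}^{\sigma}(\mathbb{D})},
\end{equation*}
where the last inequality uses the $\hat{H}^{s}$-bound of the previous paragraph and $\gamma_{0}+\min(s,\frac{1}{2}-\epsilon)=\gamma$. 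Taking the supremum over $\phi\in\mathbb{H}$ gives $\|e\|_{\mathbb{H}}\leq Ch^{\gamma}\|f\|_{\hat{H}^{\sigma}(\mathbb{D})}$, and adding the two bounds completes the proof.

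The only real difficulty is the bookkeeping of exponents: since the elliptic gain in Theorem \ref{thmregdiri} is capped at $\frac{1}{2}-\epsilon$, for $s\geq\frac{1}{2}$ the dual solution $\varphi$ has only $\hat{H}^{s+1/2-\epsilon}(\mathbb{D})$-regularity, so the duality argument recovers merely an extra factor $h^{1/2-\epsilon}$; this is exactly why $\gamma=s+\sigma+\frac{1}{2}-\epsilon$ in that range rather than $2s+\sigma$, and one must keep $\min(s,\frac{1}{2}-\epsilon)$ and $\min(s+\sigma,\frac{1}{2}-\epsilon)$ distinct throughout. Apart from this, every ingredient already appears in the proof of Lemma \ref{lemeroper2}, so no new technique is needed.
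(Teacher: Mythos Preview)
Your proposal is correct and follows essentially the same route as the paper. The paper proves the stability bound via the dual-norm characterization $\|R^{s}_{h}u\|_{\hat{H}^{s}}\leq C\sup_{v_{h}}\langle u,v_{h}\rangle_{s}/\|v_{h}\|_{\hat{H}^{s}}\leq C\sup_{v}\langle u,v\rangle_{s}/\|v\|_{\hat{H}^{s}}$, whereas your argument via Galerkin orthogonality and Cauchy--Schwarz is a touch more direct; for the error estimate the paper simply writes ``similar to the proof of Lemma~\ref{lemeroper2}'', which is exactly the C\'ea plus Aubin--Nitsche argument you spell out (that lemma is the $z^{\alpha}\neq 0$ version). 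Note that in both proofs the stability bound actually comes with an equivalence constant $C$, since $\langle\cdot,\cdot\rangle_{s}$ is only equivalent to the $\hat{H}^{s}$-norm (Remark~\ref{Remk1}); the statement of the lemma is slightly imprecise on this point.
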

\begin{proof}
	Simple calculations lead to
	\begin{equation*}
		\begin{aligned}
			\|R^{s}_{h}u\|_{\hat{H}^{s}(\mathbb{D})}\leq& C\sqrt{(\mathcal{A}^{s}R^{s}_{h}u,R^{s}_{h}u)}\leq C\sup_{v_{h}\in X_{h}}\frac{(\mathcal{A}^{s}R^{s}_{h}u,v_{h})}{\|v_{h}\|_{\hat{H}^{s}(\mathbb{D})}}\\
			\leq&  C\sup_{v_{h}\in X_{h}}\frac{(\mathcal{A}^{s}u,v_{h})}{\|v_{h}\|_{\hat{H}^{s}(\mathbb{D})}}\leq  C\sup_{v\in \hat{H}^{s}(\mathbb{D})}\frac{(\mathcal{A}^{s}u,v)}{\|v\|_{\hat{H}^{s}(\mathbb{D})}}\leq C\|u\|_{\hat{H}^{s}(\mathbb{D})}.
		\end{aligned}
	\end{equation*}
	Similar to the proof of Lemma \ref{lemeroper2}, we can get the second estimate.
\end{proof}
	Combining the definitions of $\mathcal{A}^{s}$, $\mathcal{A}^{s}_{h}$, and $P_{h}$ results in 
	\begin{equation*}
		\begin{aligned}
			(\mathcal{A}^{s}_{h}R^{s}_{h}u,v_{h})=&\langle R^{s}_{h}u,v_{h}\rangle_{s}=\langle u,v_{h}\rangle_{s}\\
			&=(\mathcal{A}^{s}u,v_{h})=(P_{h}\mathcal{A}^{s}u,v_{h}),
		\end{aligned}
	\end{equation*}
	which leads to
	\begin{equation*}
		\mathcal{A}^{s}_{h}R^{s}_{h}=P_{h}\mathcal{A}^{s}.
	\end{equation*}
	Taking $\mathcal{A}^{s}\varphi=\phi$ and $\mathcal{A}^{s}_{h}R^{s}_{h}\varphi=P_{h}\phi$ with $\phi\in\mathbb{H}$, and $s\in(0,1)$, yields
	\begin{equation*}
		\begin{aligned}
			\|(\mathcal{A}^{s}_{h})^{-\frac{1}{2}}P_{h}\phi\|_{\mathbb{H}}=&((\mathcal{A}^{s}_{h})^{-\frac{1}{2}}P_{h}\phi,(\mathcal{A}^{s}_{h})^{-\frac{1}{2}}P_{h}\phi)^{\frac{1}{2}}\\
			=&(P_{h}\phi,(\mathcal{A}^{s}_{h})^{-1}P_{h}\phi)^{\frac{1}{2}}\\
			=&(\mathcal{A}^{s}_{h}R^{s}_{h}\varphi,R^{s}_{h}\varphi)^{\frac{1}{2}}\\
			=&\|R^{s}_{h}\varphi\|_{\hat{H}^{s}(\mathbb{D})}\\
			\leq&\|\varphi\|_{\hat{H}^{s}(\mathbb{D})}\\
			\leq&C\|A^{-s/2}\phi\|_{\mathbb{H}}.
		\end{aligned}
	\end{equation*}
	By the stability of $L_{2}$ projection and interpolation theory \cite{Adams2013}, we have, for $\phi\in \mathbb{H}$ and $s\in(0,1)$,
	\begin{equation*}
		\|(\mathcal{A}^{s}_{h})^{-\sigma/2}P_{h}\phi\|_{\mathbb{H}}\leq 	\|A^{-s\sigma/2}\phi\|_{\mathbb{H}},\quad \sigma\in [0,1],
	\end{equation*}
leading to
	\begin{equation}\label{eqnagsptest}
		\begin{aligned}
			&\|(z^{\alpha}+\mathcal{A}^{s})^{-1}A^{\sigma}-(z^{\alpha}+\mathcal{A}^{s}_{h})^{-1}P_{h}A^{\sigma}\|\\
			\leq &\|(z^{\alpha}+\mathcal{A}^{s})^{-1}A^{\sigma}\|+\|(z^{\alpha}+\mathcal{A}^{s}_{h})^{-1}P_{h}A^{\sigma}\|\\
			\leq& C|z|^{(\frac{\sigma}{s}-1)\alpha}+\|(z^{\alpha}+\mathcal{A}^{s}_{h})^{-1}(\mathcal{A}^{s}_{h})^{\sigma/s}(\mathcal{A}^{s}_{h})^{-\sigma/s}P_{h}A^{\sigma}\|\\
			\leq& C|z|^{(\frac{\sigma}{s}-1)\alpha},\quad \sigma\in[0,\frac{s}{2}],
		\end{aligned}
	\end{equation}
	where we use Lemma \ref{lemresolest}.
	For $s\in(0,\frac{1}{2})$ and $\phi\in\hat{H}^{2s}(\mathbb{D})$, by Lemma \ref{lemritz}, we obtain
	\begin{equation*}
		\begin{aligned}
			&\|((z^{\alpha}+\mathcal{A}^{s})^{-1}-(z^{\alpha}+\mathcal{A}^{s}_{h})^{-1}R^{s}_{h})\phi\|_{\mathbb{H}}\\
			=&\|z^{-\alpha}(I-\mathcal{A}^{s}(z^{\alpha}+\mathcal{A}^{s})^{-1})\phi-z^{-\alpha}(R^{s}_{h}-\mathcal{A}^{s}_{h}(z^{\alpha}+\mathcal{A}^{s}_{h})^{-1}R^{s}_{h})\phi\|_{\mathbb{H}}\\
			\leq& |z|^{-\alpha}\|(I-R^{s}_{h})\phi\|_{\mathbb{H}}+|z|^{-\alpha}\|((z^{\alpha}+\mathcal{A}^{s})^{-1}-(z^{\alpha}+\mathcal{A}^{s}_{h})^{-1}P_{h})\mathcal{A}^{s}\phi\|_{\mathbb{H}}\\
			\leq& C|z|^{-\alpha}h^{2s}\|\phi\|_{\hat{H}^{2s}(\mathbb{D})},
		\end{aligned}
	\end{equation*}
	which gives
	\begin{equation}\label{eq05spter}
		\begin{aligned}
			&\|((z^{\alpha}+\mathcal{A}^{s})^{-1}-(z^{\alpha}+\mathcal{A}^{s}_{h})^{-1}P_{h})\phi\|_{\mathbb{H}}\\
			\leq&\|((z^{\alpha}+\mathcal{A}^{s})^{-1}-(z^{\alpha}+\mathcal{A}^{s}_{h})^{-1}R^{s}_{h})\phi\|_{\mathbb{H}}\\
			&+\|((z^{\alpha}+\mathcal{A}^{s}_{h})^{-1}R^{s}_{h}-(z^{\alpha}+\mathcal{A}^{s}_{h})^{-1}P_{h})\phi\|_{\mathbb{H}}\\
			\leq& Ch^{2s}|z|^{-\alpha}\|\phi\|_{\hat{H}^{2s}(\mathbb{D})}.
		\end{aligned}
	\end{equation}

\begin{theorem}\label{thmspaceerror}
	Let $u^{n}$ and $u^{n}_{h}$ be the solutions of \eqref{eqfullscheme} and \eqref{eqsemischeme}, respectively. For $\rho\in[\max(\frac{s}{2}-\frac{1}{4}+\epsilon,-s),\min(\frac{s}{2},\frac{sH}{\alpha}-\epsilon)]$, we have
	\begin{equation*}
	\mathbb{E}\|u^{n}-u^{n}_{h}\|_{\mathbb{H}}^{2}\leq
		\left\{
		\begin{aligned}
			&Ch^{\min(\frac{(4\rho-1-2s)(Hs-\alpha\rho)}{\alpha(\rho-s)}-\epsilon,2s-4\rho+1-2\epsilon)},\quad~s\in[\frac{1}{2},1),\\
			&Ch^{\min(\frac{4sH}{\alpha}-4\rho-\epsilon,4s-4\rho)},\quad s\in(0,\frac{1}{2}).
		\end{aligned}\right.
	\end{equation*}
\end{theorem}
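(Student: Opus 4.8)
The plan is to run the same machinery as in the proof of Theorem \ref{thmtimeerror}, but now with the \emph{spatial} error of the discrete solution operator in place of its temporal error. Subtracting \eqref{eqrepsolnum} from \eqref{eqrepsolfull2}, the quantity $u^{n}-u^{n}_{h}$ is the Wiener integral with respect to $W^{H}_{Q}$ of the kernel $\frac{1}{\tau}\sum_{i=1}^{n}\chi_{(t_{i-1},t_{i}]}(s)\int_{t_{i-1}}^{t_{i}}\bigl(\bar{\mathcal{R}}(t_{n}-\xi)-\bar{\mathcal{R}}_{h}(t_{n}-\xi)P_{h}\bigr)\,d\xi$; note that, unlike Theorem \ref{thmtimeerror}, no separate interpolation error appears, since both representations already carry the same piecewise-constant averaging in $s$. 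As there, I would rewrite this as a Wiener integral with respect to the Brownian motion $W_{Q}$ by \eqref{eqrepfbm}, apply It\^{o}'s isometry, and use $\|A^{-\rho}\|_{\mathcal{L}^{0}_{2}}<\infty$ to move $A^{\rho}$ onto the kernel. Inserting the contour representations of $\bar{\mathcal{R}}$ and $\bar{\mathcal{R}}_{h}$, carrying out the inner $dr'$-integral against $(r'-r)^{H-\frac32}$ (which produces a factor $|z|^{\frac12-H}$), and estimating the piecewise-constant symbol by $|e^{z(t_{n}-r)}|$ up to a factor bounded uniformly on $\Gamma^{\tau}_{\theta,\kappa}$ (using that $|z|\tau$ is bounded there), everything reduces to controlling
\[
t_{n}^{2H-1}\int_{0}^{t_{n}}\left(\int_{\Gamma^{\tau}_{\theta,\kappa}}|e^{z(t_{n}-r)}|\,|z|^{\alpha-\frac12-H}\,E(z)\,|dz|\right)^{2}r^{1-2H}\,dr,
\]
where $E(z)=\bigl\|((\delta_{\tau}(e^{-z\tau}))^{\alpha}+\mathcal{A}^{s})^{-1}A^{\rho}-((\delta_{\tau}(e^{-z\tau}))^{\alpha}+\mathcal{A}^{s}_{h})^{-1}P_{h}A^{\rho}\bigr\|$.

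The heart of the matter is a two-sided bound on $E(z)$; throughout put $w:=\delta_{\tau}(e^{-z\tau})$, so that $|w|\simeq|z|$ and $w\in\Sigma_{\theta}$ by Lemma \ref{Lemseriesest}. On one side, a \emph{stability bound} $E(z)\le C|z|^{(\frac{\rho}{s}-1)\alpha}$ holds uniformly in $h$: the continuous part is Lemma \ref{lemresolest}, and the discrete part follows from the spectral estimate $\|(w^{\alpha}+\mathcal{A}^{s}_{h})^{-1}(\mathcal{A}^{s}_{h})^{\rho/s}\|\le C|z|^{(\frac{\rho}{s}-1)\alpha}$ together with $\|(\mathcal{A}^{s}_{h})^{-\rho/s}P_{h}A^{\rho}\|\le C$, the latter being exactly the inequality established just before the theorem via the fractional Ritz projection (this is where $\rho\le\frac{s}{2}$ is used). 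On the other side, a \emph{convergence bound} comes from Lemma \ref{lemeroper2} applied to $v=A^{\rho}\phi$ with $\sigma=-2\rho$: the admissibility $-2\rho\in[-s,\frac12-s)$ is precisely the reason for the hypothesis $\rho\ge\max(\frac{s}{2}-\frac14+\epsilon,-s)$, and since $\|A^{\rho}\phi\|_{\hat{H}^{-2\rho}(\mathbb{D})}=\|\phi\|_{\mathbb{H}}$ this gives $E(z)\le Ch^{\gamma_{s}}$ with $\gamma_{s}=s-2\rho+\frac12-\epsilon$ for $s\in[\frac12,1)$ and $\gamma_{s}=2s-2\rho$ for $s\in(0,\frac12)$ (for $s<\frac12$ one may alternatively use \eqref{eq05spter} together with Lemma \ref{lemritz}).

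It remains to combine and integrate. Interpolating the two bounds with a free weight $\beta\in[0,1]$ yields $E(z)\le C\,h^{(1-\beta)\gamma_{s}}|z|^{-\beta(1-\frac{\rho}{s})\alpha}$. Substituting into the displayed quantity, applying the Cauchy--Schwarz inequality in $dz$ to split $|e^{z(t_{n}-r)}|$ from the power of $|z|$, and invoking $t_{n}^{2H-1}\int_{0}^{t_{n}}\int_{\Gamma^{\tau}_{\theta,\kappa}}|e^{2z(t_{n}-r)}|\,|dz|\,r^{1-2H}\,dr\le C$ (proved as in \eqref{eqspederi}), the whole estimate comes down to requiring $\int_{\Gamma^{\tau}_{\theta,\kappa}}|z|^{2(\alpha-\frac12-H)-2\beta(1-\frac{\rho}{s})\alpha}\,|dz|\le C$ uniformly in $\tau$; since $\Gamma^{\tau}_{\theta,\kappa}$ is truncated at $|z|\simeq\tau^{-1}$, this holds precisely when the exponent is $\le-1$, i.e.\ when $\beta\ge\beta_{\min}:=\frac{s(\alpha-H)}{\alpha(s-\rho)}$. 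Choosing $\beta=\max(\beta_{\min},0)$ (enlarged by an arbitrarily small amount to keep the $dz$-integral strictly convergent) and squaring gives $\mathbb{E}\|u^{n}-u^{n}_{h}\|_{\mathbb{H}}^{2}\le Ch^{2(1-\beta)\gamma_{s}}$: when $\alpha\le H$ this is the ``pure convergence'' rate $Ch^{2\gamma_{s}}$, equal to $h^{2s-4\rho+1-2\epsilon}$ for $s\ge\frac12$ and $h^{4s-4\rho}$ for $s<\frac12$; when $\alpha>H$ it is, using $1-\beta_{\min}=\frac{sH-\alpha\rho}{\alpha(s-\rho)}$ (positive because $\rho<\frac{sH}{\alpha}$) and simplifying, the ``balanced'' rate $h^{\frac{(4\rho-1-2s)(Hs-\alpha\rho)}{\alpha(\rho-s)}-\epsilon}$ for $s\ge\frac12$ and $h^{\frac{4sH}{\alpha}-4\rho-\epsilon}$ for $s<\frac12$; writing the two as a minimum is the statement.

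The operator-norm estimates of $E(z)$ are, once the choice $\sigma=-2\rho$ is identified, routine consequences of Lemmas \ref{lemresolest}, \ref{lemeroper2}, \ref{lemritz} and the inequalities \eqref{eqnagsptest}--\eqref{eq05spter} already at our disposal; I expect the main obstacle to be the final bookkeeping, where the $h$-powers, the $|z|$-powers on the truncated contour $\Gamma^{\tau}_{\theta,\kappa}$, and the two time singularities $(r'-r)^{H-\frac32}$ and $r^{1-2H}$ must be tracked together, and the weight $\beta$ chosen so as to make the $|z|$-exponent summable while keeping the $h$-exponent as large as possible and avoiding any negative power of $\tau$.
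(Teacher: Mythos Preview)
Your overall strategy---interpolate a stability bound and a convergence bound on the discrete resolvent difference with a parameter $\beta$, then choose $\beta$ so that the resulting $|z|$-power integrates over $\Gamma^{\tau}_{\theta,\kappa}$---is exactly what the paper does, and your final arithmetic on $\beta$ and the $h$-exponents is correct. Two places, however, do not go through quite as you describe and are handled differently in the paper.

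First, the step ``carrying out the inner $dr'$-integral against $(r'-r)^{H-3/2}$ produces a factor $|z|^{1/2-H}$'' is valid only when the integrand is a genuine convolution kernel in $t_n-r'$, as it was for $\mathcal{R}(t_n-r')$ in the proof of Theorem~\ref{thmtimeerror}. Here the kernel is the piecewise-constant average $\frac{1}{\tau}\sum_i\chi_{(t_{i-1},t_i]}(r')\int_{t_{i-1}}^{t_i}\mathcal{E}(t_n-\xi)\,d\xi$, so the convolution structure is broken and the Laplace-transform identity that would yield $z^{1/2-H}$ is unavailable. The paper instead first passes to norms---bounding the piecewise average by $C\int_{\Gamma^{\tau}_{\theta,\kappa}}|e^{z(t_n-r')}|\,|z|^{\alpha-1+\beta(\rho/s-1)\alpha}\,|dz|$---and then evaluates $\int_r^{t_n}\bigl(\int_{\Gamma^{\tau}_{\theta,\kappa}}\cdots|dz|\bigr)(r'-r)^{H-3/2}\,dr'$ directly by splitting at $(t_n-r)/4$, obtaining $C(t_n-r)^{H-\frac12+\beta(1-\rho/s)\alpha-\alpha}$. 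Requiring this exponent to exceed $-\tfrac12$ gives the same threshold $\beta\ge\frac{s(\alpha-H)}{\alpha(s-\rho)}$ that your Cauchy--Schwarz route produces, so the endpoint agrees; only the mechanism differs.

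Second, your stability bound $E(z)\le C|z|^{(\rho/s-1)\alpha}$ rests on \eqref{eqnagsptest}, which is stated (and proved via the fractional Ritz projection) only for $\rho\in[0,\tfrac{s}{2}]$. When $\rho<0$ (possible only for $s<\tfrac12$, since then $\max(\tfrac{s}{2}-\tfrac14,-s)<0$), neither side of \eqref{eqnagsptest} is available: on the continuous side Lemma~\ref{lemresolest} would need $\mu=\rho/s<0$, and on the discrete side $(\mathcal{A}^{s}_{h})^{-\rho/s}$ becomes a positive power. The paper therefore treats $\rho<0$ separately, interpolating not between \eqref{eqnagsptest} and Lemma~\ref{lemeroper2} but between Lemma~\ref{lemeroper2} at $\sigma=-2\rho$ (giving $E(z)\le Ch^{2s-2\rho}$ with no $|z|$-decay) and \eqref{eq05spter} (giving $|z|^{-\alpha}$-decay with an $h$-power), which yields $E(z)\le Ch^{2\beta s-2\rho}|z|^{(\beta-1)\alpha}$ and the constraint $\beta<H/\alpha$; this recovers exactly the second line of the statement. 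You allude to \eqref{eq05spter} for $s<\tfrac12$ but do not carry out this separate interpolation, so the case $\rho<0$ is not covered by your argument as written.
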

\begin{proof}
	Subtracting \eqref{eqrepsolfull2} from \eqref{eqrepsolnum} yields
	\begin{equation*}
		\begin{aligned}
			&\mathbb{E}\|u^{n}-u^{n}_{h}\|^{2}_{\mathbb{H}}\\
			=&\mathbb{E}\left \|\int_{0}^{t_{n}}\frac{1}{\tau}\sum_{i=1}^{n}\chi_{(t_{i-1},t_{i}]}(r)\int_{t_{i-1}}^{t_{i}}\bar{\mathcal{R}}(t_{n}-\xi)-\bar{\mathcal{R}}_{h}(t_{n}-\xi)P_{h}d\xi dW^{H}_{Q}(r)\right \|_{\mathbb{H}}^{2}\\
			=&\mathbb{E}\left \|\int_{0}^{t_{n}}\frac{1}{\tau}\sum_{i=1}^{n}\chi_{(t_{i-1},t_{i}]}(r)\int_{t_{i-1}}^{t_{i}}\mathcal{E}(t_{n}-\xi) d\xi dW^{H}_{Q}(r)\right \|_{\mathbb{H}}^{2},
		\end{aligned}
	\end{equation*}
	where
	\begin{equation*}
		\mathcal{E}(t_{n}-\xi)=\bar{\mathcal{R}}(t_{n}-\xi)-\bar{\mathcal{R}}_{h}(t_{n}-\xi)P_{h}.
	\end{equation*}
	By It\^{o}'s isometry, one has
	\begin{equation*}
		\begin{aligned}
			&\mathbb{E}\|u^{n}-u^{n}_{h}\|^{2}_{\mathbb{H}}\\
			\leq& C\mathbb{E}\Bigg \|\int_{0}^{t_{n}}\int_{r}^{t_{n}}\frac{1}{\tau}\sum_{i=1}^{n}\chi_{(t_{i-1},t_{i}]}(r')\\
			&\qquad\qquad\qquad\qquad\cdot\int_{t_{i-1}}^{t_{i}}\mathcal{E}(t_{n}-\xi)d\xi(r'-r)^{H-\frac{3}{2}}\left(\frac{r}{r'}\right )^{\frac{1}{2}-H}dr'dW_{Q}(r)\Bigg \|_{\mathbb{H}}^{2}\\
			\leq& C\int_{0}^{t_{n}}\left \|\int_{r}^{t_{n}}\frac{1}{\tau}\sum_{i=1}^{n}\chi_{(t_{i-1},t_{i}]}(r')\int_{t_{i-1}}^{t_{i}}\mathcal{E}(t_{n}-\xi)d\xi(r'-r)^{H-\frac{3}{2}}\left(\frac{r}{r'}\right )^{\frac{1}{2}-H}dr'\right \|_{\mathcal{L}_{2}^{0}}^{2}dr\\
			\leq& Ct_{n}^{2H-1}\int_{0}^{t_{n}}\left \|\int_{r}^{t_{n}}\frac{1}{\tau}\sum_{i=1}^{n}\chi_{(t_{i-1},t_{i}]}(r')\int_{t_{i-1}}^{t_{i}}\mathcal{E}(t_{n}-\xi)d\xi(r'-r)^{H-\frac{3}{2}}A^{\rho}dr'\right \|^{2}r^{1-2H}dr.\\
		\end{aligned}
	\end{equation*}
	Simple calculations lead to
	\begin{equation*}
		\begin{aligned}
			&\left \|\frac{1}{\tau}\sum_{i=1}^{n}\chi_{(t_{i-1},t_{i}]}(r')\int_{t_{i-1}}^{t_{i}}\mathcal{E}(t_{n}-\xi)A^{\rho}d\xi\right \|\\
			\leq&C\Bigg \|\frac{1}{\tau}\sum_{i=1}^{n}\chi_{(t_{i-1},t_{i}]}(r')\int_{t_{i-1}}^{t_{i}}\int_{\Gamma_{\theta,\kappa}^{\tau}}e^{z(t_{n}-r')}e^{z(r'-\xi)}(\delta_{\tau}(e^{-z\tau}))^{\alpha-1}\frac{z\tau}{e^{z\tau}-1}\\
			&\cdot(((\delta_{\tau}(e^{-z\tau}))^{\alpha}+\mathcal{A}^{s})^{-1}-((\delta_{\tau}(e^{-z\tau}))^{\alpha}+\mathcal{A}^{s}_{h})^{-1}P_{h})A^{\rho}dzd\xi \Bigg \|\\
			\leq&C\Bigg \|\int_{\Gamma_{\theta,\kappa}^{\tau}}e^{z(t_{n}-r')}(\delta_{\tau}(e^{-z\tau}))^{\alpha-1}\frac{z\tau}{e^{z\tau}-1}\\
			&\qquad\qquad\cdot((\delta_{\tau}(e^{-z\tau}))^{\alpha}+\mathcal{A}^{s})^{-1}-((\delta_{\tau}(e^{-z\tau}))^{\alpha}+\mathcal{A}^{s}_{h})^{-1}P_{h})A^{\rho} dz\Bigg \|.\\
		\end{aligned}
	\end{equation*}
	For $\rho\in[\max(\frac{s}{2}-\frac{1}{4}+\epsilon,0),\frac{s}{2})$, using \eqref{eqnagsptest}, Lemma \ref{lemeroper2}, and interpolation properties, we find
	\begin{equation*}
		\begin{aligned}
		&	\left \|\frac{1}{\tau}\sum_{i=1}^{n}\chi_{(t_{i-1},t_{i}]}(r')\int_{t_{i-1}}^{t_{i}}\mathcal{E}(t_{n}-\xi)A^{\rho}d\xi\right \|\\
			&\qquad\leq Ch^{(1-\beta)\gamma}\int_{\Gamma_{\theta, \kappa}^{\tau}}|e^{z(t_{n}-r')}||z|^{\alpha-1+\beta(\frac{\rho}{s}-1)\alpha}|dz|
		\end{aligned}
	\end{equation*}
	with $\beta\in[0,1]$
	and
	\begin{equation*}
		\gamma=\left\{
		\begin{aligned}
			&2s-2\rho,\quad s\in (0,\frac{1}{2}),\\
			&s-2\rho+1/2-\epsilon,\quad s\in [\frac{1}{2},1).
		\end{aligned}\right .
	\end{equation*}
	Thus
	\begin{equation*}
		\begin{aligned}
			&\mathbb{E}\|u^{n}-u^{n}_{h}\|^{2}_{\mathbb{H}}\\
			\leq&Ch^{2(1-\beta)\gamma} t_{n}^{2H-1}\\
			&\cdot\int_{0}^{t_{n}}\left (\int_{r}^{t_{n}}\int_{\Gamma_{\theta, \kappa}^{\tau}}|e^{z(t_{n}-r')}||z|^{\alpha-1+\beta(\frac{\rho}{s}-1)\alpha}|dz|(r'-r)^{H-\frac{3}{2}}dr'\right )^{2}r^{1-2H}dr.\\
		\end{aligned}
	\end{equation*}
Using	mean value theorem leads to
	\begin{equation*}
		\begin{aligned}
			&\int_{r}^{t_{n}}\int_{\Gamma_{\theta, \kappa}^{\tau}}|e^{z(t_{n}-r')}||z|^{\alpha-1+\beta(\frac{\rho}{s}-1)\alpha}|dz|(r'-r)^{H-\frac{3}{2}}dr'\\
			\leq&\int_{0}^{t_{n}-r}\int_{\Gamma_{\theta, \kappa}^{\tau}}|e^{z(t_{n}-r-\xi)}||z|^{\alpha-1+\beta(\frac{\rho}{s}-1)\alpha}|dz|\xi^{H-\frac{3}{2}}d\xi\\
			\leq&\int_{\Gamma_{\theta, \kappa}^{\tau}}|e^{z(t_{n}-r)}|\int_{0}^{\frac{t_{n}-r}{4}}|e^{-z\xi}|\xi^{H-\frac{3}{2}}d\xi|z|^{\alpha-1+\beta(\frac{\rho}{s}-1)\alpha}|dz|\\
			&+\int_{\Gamma_{\theta, \kappa}^{\tau}}|e^{z(t_{n}-r)}|\int_{\frac{t_{n}-r}{4}}^{t_{n}-r}|e^{-z\xi}|\xi^{H-\frac{3}{2}}d\xi|z|^{\alpha-1+\beta(\frac{\rho}{s}-1)\alpha}|dz|\\
			\leq &C(t_{n}-r)^{H-\frac{1}{2}+\beta(1-\frac{\rho}{s})\alpha-\alpha}.
		\end{aligned}
	\end{equation*}
	To preserve the boundness of $\mathbb{E}\|u^{n}-u^{n}_{h}\|^{2}_{\mathbb{H}}$, we need to require $H-\frac{1}{2}+\beta(1-\frac{\rho}{s})\alpha-\alpha>-\frac{1}{2}$, i.e., $\beta\geq\max(\frac{s\alpha-sH}{(s-\rho)\alpha}+\epsilon,0)$. Thus
	\begin{equation*}
		\mathbb{E}\|u^{n}-u^{n}_{h}\|^{2}_{\mathbb{H}}\leq \left\{
		\begin{aligned}
			&Ch^{\min(\frac{(4\rho-1-2s)(Hs-\alpha\rho)}{\alpha(\rho-s)}-\epsilon,2s-4\rho+1-2\epsilon)},\quad~s\in[\frac{1}{2},1),\\
			&Ch^{\min(\frac{4sH}{\alpha}-4\rho-\epsilon,4s-4\rho)},\quad s\in(0,\frac{1}{2}).
		\end{aligned}\right.
	\end{equation*}
	For $\rho<0$, we have $s\in(0,\frac{1}{2})$. Combining Lemma \ref{lemeroper2}, \eqref{eq05spter}, and interpolation properties results in
	\begin{equation*}
		\left \|\frac{1}{\tau}\sum_{i=1}^{n}\chi_{(t_{i-1},t_{i}]}(r')\int_{t_{i-1}}^{t_{i}}\mathcal{E}(t_{n}-\xi)A^{\rho}d\xi\right \|\leq Ch^{2\beta s-2\rho}\int_{\Gamma_{\theta, \kappa}^{\tau}}|e^{z(t_{n}-r')}||z|^{\beta\alpha-1}|dz|
	\end{equation*}
	with $\beta\in[0,1]$. Similarly, there holds
	\begin{equation*}
		\begin{aligned}
			&\int_{r}^{t_{n}}\int_{\Gamma_{\theta, \kappa}^{\tau}}|e^{z(t_{n}-r')}||z|^{\beta\alpha-1}|dz|(r'-r)^{H-\frac{3}{2}}dr'\\
			\leq&\int_{0}^{t_{n}-r}\int_{\Gamma_{\theta, \kappa}^{\tau}}|e^{z(t_{n}-r-\xi)}||z|^{\beta\alpha-1}|dz|\xi^{H-\frac{3}{2}}d\xi\\
			\leq&\int_{\Gamma_{\theta, \kappa}^{\tau}}|e^{z(t_{n}-r)}|\int_{0}^{\frac{t_{n}-r}{4}}|e^{-z\xi}|\xi^{H-\frac{3}{2}}d\xi|z|^{\beta\alpha-1}|dz|\\
			&+\int_{\Gamma_{\theta, \kappa}^{\tau}}|e^{z(t_{n}-r)}|\int_{\frac{t_{n}-r}{4}}^{t_{n}-r}|e^{-z\xi}|\xi^{H-\frac{3}{2}}d\xi|z|^{\beta\alpha-1}|dz|\\
			\leq &C(t_{n}-r)^{H-\frac{1}{2}-\beta\alpha}.
		\end{aligned}
	\end{equation*}
	To preserve the boundness of $\mathbb{E}\|u^{n}-u^{n}_{h}\|^{2}_{\mathbb{H}}$, we need to require $H-\frac{1}{2}-\beta\alpha>-\frac{1}{2}$, i.e., $\beta\leq\min(\frac{H}{\alpha}-\epsilon,1)$. So we get
	\begin{equation*}
		\mathbb{E}\|u^{n}-u^{n}_{h}\|^{2}_{\mathbb{H}}\leq Ch^{\min(\frac{4sH}{\alpha}-4\rho-\epsilon,4s-4\rho)}.
	\end{equation*}
\end{proof}
\section{Numerical experiments}
In this part, we present some examples to verify the theoretical results in Theorems \ref{thmtimeerror} and \ref{thmspaceerror} with different $s$, $\alpha$, $H$, and $\rho$.
Suppose that the covariance operator $Q$ shares the eigenfunctions with the operator $A$ and denote its eigenvalues as $\varLambda_{k}=k^{m}$, where $k=1,2,\cdots$, and $m\leq 0$.
By the assumption $\|A^{-\rho}\|_{\mathcal{L}^{0}_{2}}<\infty$ and Lemma \ref{thmeigenvalue}, we have $\rho>\frac{1+m}{4}d$ and $d$ is the dimension of the space.

For convenience, we choose the domain $\mathbb{D}=(0,1)$ and take
\begin{equation*}
	W^{H}_{Q}(x,t)=\sum_{k=1}^{5000}\sqrt{\varLambda_{k}}\phi_k(x)W^{H}_k(t),
\end{equation*}
where
\begin{equation*}
	\phi_{k}(x)=\sqrt{2}\sin(k\pi x).
\end{equation*}
We use $100$ trajectories to compute the solution of Eq. $\eqref{equretosol}$. Due to that the exact solution $u$ is unknown, we calculate
\begin{equation*}
	\begin{aligned}
		&e_{h}=\left (\frac{1}{100}\sum_{i=1}^{100}\|u^{N}_{h}(\omega_{i})-u^{N}_{h/2}(\omega_{i})\|^{2}_{\mathbb{H}}\right )^{\frac{1}{2}},\\
		&e_{\tau}=\left (\frac{1}{100}\sum_{i=1}^{100}\|u_{\tau}(\omega_{i})-u_{\tau/2}(\omega_{i})\|^{2}_{\mathbb{H}}\right )^{\frac{1}{2}}
	\end{aligned}
\end{equation*}
to measure the spatial and temporal errors, where the $u^{N}_{h}(\omega_{i})$ ($u_{\tau}(\omega_{i})$) means the numerical solution of $u$ at $t_N$ with mesh size $h$ (step size $\tau$) and trajectory $\omega_{j}$; so the spatial and temporal convergence rates can be, respectively, tested by
\begin{equation*}
	{\rm Rate}=\frac{\ln(e_{h}/e_{h/2})}{\ln(2)},\quad {\rm Rate}=\frac{\ln(e_{\tau}/e_{\tau/2})}{\ln(2)}.
\end{equation*}

\begin{example}
	In this example, we solve Eq. \eqref{equretosol} numerically with terminal time $T=1$ by numerical scheme \eqref{eqsemischeme} to validate the temporal convergence rates. Here, we take $h=\frac{1}{256}$ to make the error incurred by spatial discretization negligible. The numerical results with different $m$, $\alpha$, $s$, $H$ are presented in Table \ref{tab:time}. All the results agree with the predicted theoretical convergence rates  (the
	numbers in the bracket in the last column) by Theorem \ref{thmtimeerror}.

\begin{table}[htbp]
	\caption{Temporal errors and convergence rates}
	\begin{tabular}{ccccccc}
		\hline
		m & $(\alpha,s,H)\backslash \tau$ & 1/32 & 1/64&1/128 & 1/256 & Rates \\
		\hline
		0 & (0.7,0.6,0.85) & 8.914E-03 & 6.174E-03&4.157E-03 & 2.833E-03 & 0.5513(0.5583) \\ &(0.8,0.7,0.85) & 9.150E-03 & 5.971E-03&4.084E-03 & 2.755E-03 & 0.5773(0.5643) \\
		-0.5 & (0.4,0.3,0.8) & 7.549E-03 & 4.754E-03&2.976E-03 & 1.851E-03 & 0.6761(0.6333) \\
		& (0.6,0.8,0.6) & 1.356E-02 & 9.439E-03&6.715E-03 & 4.722E-03 & 0.5075(0.5063) \\
		-1 & (0.3,0.4,0.8) & 2.766E-03 & 1.640E-03&9.619E-04 & 5.559E-04 & 0.7716(0.8000) \\
		& (0.8,0.6,0.6) & 1.878E-02 & 1.252E-02&8.733E-03 & 5.754E-03 & 0.5690(0.6000) \\
		\hline
	\end{tabular}
	\label{tab:time}
\end{table}
\end{example}
\begin{example}
	Here, we perform some numerical experiments to validate the spatial convergence rates.  We take $T=0.01$ and $\tau=\frac{T}{1024}$ to eliminate the influence from temporal discretization. We choose different $m$, $\alpha$, $s$, $H$; and the corresponding numerical results with different $s\in(0,\frac{1}{2})$ and $s\in[\frac{1}{2},1)$  are presented in Tables \ref{tab:spa1} and \ref{tab:spa2}, respectively, which verify the results of Theorem \ref{thmspaceerror}.
\begin{table}[htbp]
	\caption{Spatial errors and convergence rates with $s\in(0,1/2)$}
	\begin{tabular}{ccccccc}
		\hline
		m & $(\alpha,s,H)\backslash h$ & 1/64 & 1/128 & 1/256 & 1/512 &Rates\\
		\hline
		-0.5 & (0.3,0.3,0.7) & 4.250E-02 & 3.461E-02 & 2.818E-02 & 2.248E-02&0.3062(0.3500) \\
		& (0.3,0.4,0.8) & 9.721E-03 & 6.738E-03 & 4.619E-03 & 3.162E-03&0.5401(0.5500) \\
		
		-1 & (0.5,0.3,0.7) & 1.965E-02 & 1.401E-02 & 9.944E-03 & 6.967E-03&0.4988(0.6000) \\
		& (0.5,0.4,0.8) & 1.280E-02 & 7.714E-03 & 4.541E-03 & 2.654E-03&0.7564(0.8000) \\
		-1.5 & (0.9,0.2,0.6) & 9.883E-02 & 7.938E-02 & 6.434E-02 & 5.297E-02&0.2999(0.2667) \\
		\hline
	\end{tabular}
	\label{tab:spa1}
\end{table}

\begin{table}[htbp]
	\caption{Spatial errors and convergence rates with $s\in[1/2,1)$}
	\begin{tabular}{ccccccc}
		\hline
		m & $(\alpha,s,H)\backslash h$ & 1/32 & 1/64 & 1/128 & 1/256 &  Rates\\
		\hline
		-0.2 & (0.5,0.6,0.6) & 4.429E-02 & 2.612E-02 & 1.592E-02 & 9.189E-03 & 0.7563(0.7000) \\
		& (0.5,0.7,0.8) & 6.380E-03 & 3.279E-03 & 1.580E-03 & 7.913E-04 & 1.0038(0.8000) \\
		-0.4 & (0.7,0.6,0.6) & 8.333E-02 & 5.437E-02 & 3.280E-02 & 1.992E-02 & 0.6882(0.6476) \\
		& (0.7,0.7,0.8) & 1.158E-02 & 5.669E-03 & 2.770E-03 & 1.301E-03 & 1.0516(0.9000) \\
		-0.6 & (0.9,0.6,0.6) & 1.303E-01 & 9.042E-02 & 5.948E-02 & 3.896E-02 & 0.5808(0.5400) \\
		\hline
	\end{tabular}
	\label{tab:spa2}
\end{table}
\end{example}
\section{Conclusions}

The macroscopic descriptions for the competition between subdiffusion and L\'evy flights are governed by the fractional Fokker-Planck equation with temporal and spatial fractional derivatives. We do the numerical analyses for the stochastic version of the model, which are driven by the external fractional Gaussian noise. The backward Euler convolution quadrature and finite element method are, respectively, used to approximate the time and spatial operators. The complete error analyses are provided; and numerical experiments verify the effectiveness of the presented numerical scheme.

\bibliographystyle{amsplain}
\bibliography{references}
\end{document}